	\pgfplotsset{compat=1.12} 
\newcommand{\benum}[2]{
	\begin{enumerate}[
		leftmargin={#2}, label={#1}
		]
	}
	\theoremstyle{definition}
	\newtheoremstyle{TheoremNum}
        {\topsep}{\topsep}              
        {\itshape}                      
        {}                              
        {\bfseries}                     
        {.}                             
        { }                             
        {\thmname{#1}\thmnote{ \bfseries #3}}
    \theoremstyle{TheoremNum}
	\newcommand{\df}[1]{{\bf\emph{#1}}}		
\renewcommand{\epsilon}{\varepsilon}	
\renewcommand{\phi}{\varphi}			
\renewcommand{\hat}[1]{\widehat{#1}}	
\definecolor{orange}{RGB}{250, 140, 0}
\definecolor{turq}{RGB}{0, 160, 160}
\newcommand\mc[1]{\mathcal{#1}}
\newcommand{\rr}{\ensuremath{\mathbb{R}}}
\newcommand{\zz}{\ensuremath{\mathbb{Z}}}
	\newcommand*{\arrow}[1]{\arrow[#1]}
\newcommand{\diag}{diag}		
		\DeclareFontFamily{U}{mathx}{\hyphenchar\font45}
		\DeclareFontShape{U}{mathx}{m}{n}{
	      <5> <6> <7> <8> <9> <10>
	      <10.95> <12> <14.4> <17.28> <20.74> <24.88>
	      mathx10
	      }{}
		\DeclareSymbolFont{mathx}{U}{mathx}{m}{n}
		\DeclareMathAccent{\widecheck}{0}{mathx}{"71}
\newcommand{\alphab}{\boldsymbol{\alpha}}%
\newcommand{\betab}{\boldsymbol{\beta}}
\newcommand{\varrhob}{\boldsymbol{\varrho}}
\newcommand{\R}{\mathbb{R}}
\newcommand{\N}{\mathbb{N}}
\newcommand{\Ab}{\mathbf{A}}
\newcommand{\Bb}{\mathbf{B}}
\newcommand{\bb}{\mathbf{b}}
\newcommand{\cb}{\mathbf{c}}
\newcommand{\eb}{\mathbf{e}}
\newcommand{\Kb}{\mathbf{K}}
\newcommand{\kb}{\mathbf{k}}
\newcommand{\kbh}{\hat{\mathbf{k}}}
\newcommand{\ub}{\mathbf{u}}
\newcommand{\vb}{\mathbf{v}}
\newcommand{\zb}{\mathbf{z}}
\newcommand{\Zb}{\mathbf{Z}}
\newcommand{\xb}{\mathbf{x}}
\newcommand{\Yb}{\mathbf{Y}}
\newcommand{\yb}{\mathbf{y}}
\newcommand{\nulb}{\mathbf{0}}
\newcommand{\oneb}{\mathbf{1}}
\newcommand{\CB}{complex balanced}
\newcommand{\de}{differential equation}
\newcommand{\ikde}{induced kinetic differential equation}
\newcommand{\rhs}{right hand side}
\newcommand{\WR}{weakly reversible}
\newcommand{\SSS}{\mathcal{S}}
\theoremstyle{definition}
\newtheorem{lemma}{Lemma}
\newtheorem{theorem}{Theorem}
\newtheorem{proposition}{Proposition}
\newtheorem{example}{Example}
\newtheorem{remark}{Remark}
\newtheorem{conjecture}{Conjecture}
\newtheorem{definition}{Definition}
\newcommand{\vv}[1]{\mathbf{#1}}
\newcommand{\sign}{\mathrm{sign}}
\newcommand{\cf}[1]{\chemfig{#1}}
\newcommand{\NN}{\mathcal{N}}
\newcommand{\T}{^{\top}}
\title{Realizations of kinetic differential equations}
\date{}
\author{%
Gheorghe Craciun
    \thanks{Department of Mathematics and Department of Biomolecular Chemistry, University of Wisconsin--Madison}
\and
Matthew D. Johnston
    \thanks{Department of Mathematics, San Jose State University,
    San Jose, CA 95192, USA}
\and
G\'abor Szederk\'enyi
    \thanks{Faculty of Information Technology and Bionics, P\'azm\'any P\'eter Catholic University,
  Budapest, Hungary}
\and 
Elisa Tonello
    \thanks{Department of Mathematics  and Computer Science,
  Freie Universit\"at,
  Berlin, Germany}
\and 
J\'anos T\'oth
    \thanks{Department of Analysis, Budapest University of Technology and Economics and Laboratory for Chemical Kinetics, E\"otv\"os Lor\'and University,
  Budapest,  Hungary}
\and
Polly Y. Yu
    \thanks{Department of Mathematics, University of Wisconsin-Madison,
  Madison, WI 53706-1325, USA}
}
\begin{document}

\maketitle


\begin{abstract}
The induced kinetic differential equations of a reaction network endowed with mass action type kinetics is a system of polynomial differential equations. The problem studied here is: Given a system of polynomial differential equations, is it possible to find a network which induces these equations; in other words: is it possible to find a \emph{kinetic realization} of this system of differential equations? If yes, can we find a network with some chemically relevant properties (implying also important dynamic consequences), such as reversibility, weak reversibility, zero deficiency, detailed balancing, complex balancing, mass conservation, etc.? The constructive answers presented to a series of questions of the above type are useful when fitting differential equations to datasets, or when trying to find out the dynamic behavior of the solutions of differential equations. It turns out that some of these results can be applied when trying to solve seemingly unrelated mathematical problems, like the existence of positive solutions to algebraic equations. \\

\noindent 
\noindent{
\textbf{Keywords: kinetic equations, reversibility, weak reversibility, mass action kinetics, reaction networks, realizations} 
}
\end{abstract}

\section{Introduction}
Our goal here is to find reaction networks inducing a given system of 
polynomial differential equations (or classes of equations with coefficients as symbolic parameters) with as many good properties (e.g. weak reversibility, reversibility, small deficiency, small number of linkage classes or reactions, mass conservation, etc.) as possible.

It has been shown that, under mass action kinetics, a necessary and sufficient condition for realizability is a sign structure in the system of polynomial differential equations~\cite{harstoth} (see Lemma~\ref{lemma:hungarian} below). Once we know that a system can arise from a mass action system, the question still lies: can the system arise from a mass action system with good properties? This is what this present work looks at. 
There exist similar works outside the realm of mass action kinetics. For example,  \cite{arceojosemarinsanguinomendoza} finds mass action systems that generate the same differential equation as a given S-system or generalized mass action system. The approach allows recent results from Chemical Reaction Network Theory to be applied in Biochemical Systems Theory.

\bigskip

There are several sources of motivation for this problem.
\begin{enumerate}
\item
Having fitted a system of differential equations to  data, one may wonder whether the obtained equations can be interpreted as the mass action type deterministic model of an appropriate reaction network.
\item
There is an internal requirement within this branch of science:
One should like to know as much as possible about the structure of differential equations that arise from modelling a chemical system.
\item
Given a system of polynomial differential equations in any field of pure or applied mathematics, one may wish to have statements on stability or oscillations, similar to those offered by the Horn-Jackson Theorem~\cite{hornjackson}, Zero Deficiency Theorem~\cite{feinbergbook}, Volpert's theorem~\cite{volperthudyaev}, or the Global Attractor Conjecture, where several cases have been proven~\cite{Anderson2011GAC, Pantea2012GAC, GopalkrshnanMillerShiu2014GAC, CraciunNazarovPantea2013GAC}\footnote{%
A proof in full generality has been proposed in \cite{craciunGAH}.}. %
Then it comes in handy to see that the system of differential equations of interest belongs to a well behaving class.

\item Lastly, results of formal reaction kinetics (to use an expression introduced in~\cite{Aris1965,aris}), e.g. on the existence of stationary points, may offer alternative methods for solving problems in algebraic geometry~\cite{dukaricerramijeralalebarromanovskitothweber,lichtblau}. For instance, one might be able to show the existence of positive roots of a polynomial if the system of polynomial equations is known to be the right hand side of the \ikde\ of a reversible or weakly reversible reaction network~\cite{borosPosSteadyStates}.

\end{enumerate}

\bigskip

The structure of our paper is as follows.
Section 2 introduces the essential concepts of reaction networks and mass action systems.
Section 3 formulates the problem  we are interested in, that of realizability of kinetic differential equations. 
Section 4 treats two special cases: finding realizations for \emph{compartmental models} (defined later) and weakly reversible networks. 
Section 5 focuses on the general problem of realizability. We first review existing algorithms available. Then we outline several procedures that modify a reaction network while preserving the system of differential equations, including adding and removing vertices from the reaction graph. 
Section 6 explores the relation between weakly reversible and complex balanced realizations. Here we work with families of symbolic kinetic differential equations.  
In Section 7, we prove the geometric meaning of zero deficiency, and discuss when is a realization unique. 
We also pose several conjectures that may be of interest to the reader. 
An Appendix contains a very large number of enlightening examples. 
The present paper intends to be a review of the realizability problem, while  containing some new results.

\bigskip

A few remarks on notation and the use of words are in order.
\begin{enumerate}
\item 
The set of vectors in $\rr^M$ with strictly positive coordinates is denoted by $\rr^M_{>0}$. Also $$\xb^{\yb} := x_1^{y_1}x_2^{y_2}\cdots x_M^{y_M}$$ for any $x_i > 0$ and $y_i \geq 0$. If $\Yb = (\yb_1, \yb_2, \ldots, \yb_N)$, then $$\xb^{\Yb} = (\xb^{\yb_1}, \xb^{\yb_2},\ldots, \xb^{\yb_N})^T.$$ 
\item
We use differential equation to mean a  \emph{system of ordinary differential equations.} In the same vein, we use \emph{polynomial} to usually mean a system of polynomial equations, i.e. a vector-valued function on $
\rr_{>0}^M$. By \emph{monomial} we mean a  scalar-valued function on $
\rr_{>0}^M$.
\item
We say a reaction network  \emph{induces} a \de, if we formulate the mass action type differential equation with given reaction rate coefficients, and a \de\  \emph{is realized} by a reaction network, if the network induces the given differential equation.
\end{enumerate}


\section{Chemical reaction networks and mass action systems}

Here we recapitulate some concepts of the mathematical description of reaction networks; for more details, see e.g.~\cite{feinbergbook,CraciunYu_Review, tothnagypapp}.

\label{def:basics}
A \df{reaction network} consists of a set of $R$ reaction steps
among the chemical species $\ce{X(1)}$, $ \ce{X(2)}, \ldots,  \ce{X(M)}$: 
\begin{equation}
\sum_{m=1}^M\alpha(m,r) \ce{X($m$)} \rightarrow\sum_{m=1}^M\beta(m,r) \ce{X($m$)}
\quad (r=1,2,\dots,R)
\label{eq:reaction}
\end{equation}
with nonnegative integer \df{stoichiometric coefficients} $\alpha(m,r)$,  $\beta(m,r)$.
Its induced kinetic differential equation assuming mass action type kinetics
(and disregarding the change of temperature, pressure and reaction volume) is
\begin{equation}\label{eq:ikdecoord}
\frac{dx_m}{dt}=\sum_{r=1}^R(\beta(m,r)-\alpha(m,r))k_r \prod_{p=1}^Mx_p^{\alpha(p,r)}
\quad (m=1,2,\dots,M).
\end{equation}
The positive number $k_r$ is the \df{rate coefficient}
\footnote{Instead of \emph{rate constants} we use the term \emph{rate coefficients}, because they do depend on everything (pressure, temperature, volume), except concentrations, although we know that there is a heated argument about this question amongst reaction kineticists, see~\cite[p. 115]{lentedet}.} 
for the $r$-th reaction.

\bigskip

On the two sides of arrows in \eqref{eq:reaction}, which represent a reaction step, one has formal linear combinations of the species, called \df{complexes}, the left one being the \df{reactant complex}, and the right one being the \df{product complex}. 
Associated to each complex is the vector of the coefficients of these linear combinations. By an abuse of notation, we will refer to this vector as a complex. 
The difference between the product complex and the reactant complex is the \df{reaction vector}. These vectors form the columns of the \df{stoichiometric matrix}. 
The \df{stoichiometric space} $\SSS$ is the linear span of the reaction vectors, where $S = \dim \SSS$. (This number can also be interpreted as the number of independent reaction steps.) The \df{stoichiometric compatibility class} of $\xb_0 \in \mathbb{R}_{>0}^M$ is the forward-invariant set $(\xb_0 + \SSS)_> := (\xb_0 + \SSS)\cap \mathbb{R}_{>0}^M$. The number of complexes is denoted by $N$.

With sets of reactions and complexes as described above, each reaction network is naturally associated to a directed graph.

\begin{definition}
The \df{reaction graph} (or \df{Feinberg-Horn-Jackson graph}) of a given reaction network is a finite directed graph with no self-loops, where the vertices are the complexes of the reaction network, and the edges are precisely the reactions. 
\end{definition}

The connected components of a reaction graph are also called \df{linkage classes}, and $L$ denotes the number of linkage classes in a reaction network. 
The \df{deficiency} of the reaction network is $\delta:=N-L-S$, where $N$ is the number of complexes (i.e., number of nodes in the reaction graph), and $S = \dim \SSS$. 
The strong components of the reaction graph are also called \df{strong linkage classes}. An \df{ergodic component} (or \df{strong terminal class}) is a strong component with no reaction step such that the reactant complex is in the component while the product complex is outside of it, i.e., an ergodic component is a strongly connected component of the reaction graph. The number of ergodic components will be denoted by $T$.

\begin{definition}
\label{def:classesofreactions}
A reaction network is \df{reversible} if its reaction graph as a relation is symmetric, i.e., $i \to j$ is a reaction if and only if $j \to i$ is a reaction. 
It is \df{\WR}, if the transitive closure of its reaction graph is symmetric;
or if all the reaction arrows are edges of a directed cycle in the reaction graph;
or all of its strong components are ergodic.\footnote{Weak reversibility implies $T=L$, but the converse is false~\cite{feinberghornkinetic}.} 
\end{definition}

\begin{definition}
A reversible reaction network (with rate coefficients $k_{r+}$, $k_{r-}$ for the $r$-th pair of reversible reactions) 
\begin{equation}\label{eq:revreacgeneral}
\sum_{m=1}^M\alpha(m,r)\ce{X($m$)}\rightleftharpoons\sum_{m=1}^M\beta(m,r)\ce{X($m$)}
\quad(r=1,2,\dots,P := R/2)
\end{equation}
is \df{detailed balanced} at the positive stationary concentration $\xb^*$ if for each pair of reversible reactions, the forward step proceeds at the same rate as the backward step; in other words, $\xb^*$ is detailed balanced if 
$$
    k_{r+}(\xb^*)^{\alphab_r} = k_{r-} (\xb^*)^{\betab_r}, 
    \quad 
    (r = 1,2,\ldots, P).
    \label{eq:forfredholm}
$$
\end{definition} 

\bigskip 

The \ikde\ can be written in many different forms, see e.g.~\cite[Sec. 6.3]{tothnagypapp}.
We introduce the form that will be used frequently below.

For a reaction graph with $N$ complexes, let their associated vectors be columns of the matrix  $\Yb := (\yb_1,\yb_2,\dots,\yb_N)$. Implicitly we are imposing an ordering on the set of complexes. For a reaction $i \to j$ (from the $i$-th complex to the $j$th complex), assume that the rate coefficient is $k_{ji} > 0$. Define the matrix $\Kb$ to be
\begin{align}
    [\Kb]_{ij} &= \left\{\begin{array}{ll}
        k_{ij} & \text{ if } j \to i \text{ is a reaction in the graph} \\
        0 & \text{ otherwise }
    \end{array}
    \right.
\end{align}
so that $\Kb$ is a matrix with nonnegative entries, and $0$ along the diagonal. Then the column-conserving matrix $\Ab_k := \Kb-\diag(\Kb\T\oneb)$ is the \df{(weighted) Laplacian} of the reaction graph. A matrix with nonnegative off-diagonal entries, and is column-conserving is also called \df{Kirchoff}.

With these matrices defined, the \ikde\ \eqref{eq:ikdecoord} of the reaction network 
can be rewritten in the following form:
\begin{equation}\label{eq:ikdevecFHJ}
\frac{d\xb}{dt}=\Yb \cdot \Ab_k \cdot \xb^{\Yb},
\end{equation}
where $\xb^{\Yb}$ is the vector of monomials $(\xb^{\yb_1},\xb^{\yb_2}, \ldots, \xb^{\yb_N})^T$. 

Note that the  differential equation (\ref{eq:ikdevecFHJ}) is completely determined by the complexes (columns of $\Yb$) and the connectivity and rate coefficients of the reaction graph  (given by $\Ab_k$). 

\begin{definition}
The reaction system \eqref{eq:ikdevecFHJ} is \df{complex balanced} at the positive stationary concentration $\xb^*$, if all the complexes are destroyed and formed with the same rate at this concentration:
$$
\sum_{q \neq n} k_{qn}(\xb^*)^{\yb_n}=\sum_{q \neq n} k_{nq}(\xb^*)^{\yb_q}
$$
for all $n = 1,2,\ldots, N$. Equivalently, $\xb^*$ is complex balanced if $\Ab_k (\xb^*)^{\Yb} = \vv 0$. 
\end{definition}

If one remains within the framework of mass action kinetics -- as we do here -- then the Deficiency Zero Theorem provides a necessary and sufficient structural condition for complex balancing~\cite{feinberg1972cb, horn1972, hornjackson}: 
\begin{theorem}
A mass action system is complex balanced for all choices of the rate coefficients if and only if it is \WR\ and its deficiency is zero.
\end{theorem}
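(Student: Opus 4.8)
The plan is to recast complex balancing as a linear-algebraic intersection problem and read off both conditions from it. Fix the rate coefficients, so that $\Ab_k$ is determined. By the definition above, the system is complex balanced exactly when there is a strictly positive vector in $\Ker \Ab_k$ of the special form $(\xb^*)^{\Yb}$. Taking componentwise logarithms, $\log\big((\xb^*)^{\Yb}\big) = \Yb\T\log \xb^*$, so the attainable monomial vectors are precisely $\exp(\Img \Yb\T)$, a multiplicative subgroup of $\rr^N_{>0}$ of dimension $\rank \Yb$. Thus a complex balanced state exists iff the positive part of $\Ker \Ab_k$ meets this log-linear set, and the whole proof amounts to controlling the two objects $\Ker \Ab_k$ and $\Img \Yb\T$.

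First I would establish the necessity of weak reversibility from the structure of the graph Laplacian. Via the Matrix--Tree theorem, $\Ker \Ab_k$ is spanned by nonnegative tree-constant vectors, one for each ergodic component, each strictly positive on its own strong component and zero elsewhere; in particular $\dim \Ker \Ab_k = T$. Hence a strictly positive kernel vector can exist only if the ergodic components cover all $N$ complexes, that is, every strong component is ergodic --- which is exactly weak reversibility. So even a single complex balanced state (for some $k$) forces the network to be weakly reversible.

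Assume now the network is weakly reversible, so $T=L$, $\dim \Ker\Ab_k = L$, and the positive kernel vectors are exactly the products $b\cdot\omega$, where $b>0$ is a fixed reference kernel vector and $\omega$ ranges over vectors that are constant on each linkage class; write $W$ for the $L$-dimensional space of such linkage-class-constant vectors. A complex balanced state then exists iff $\log b\in \Img\Yb\T + W$. The deficiency enters through the orthogonality identity
\[
 (\Img\Yb\T + W)^{\perp} \;=\; \Ker\Yb\cap W^{\perp} \;=\; \Ker\Yb\cap \Img\Ab_k ,
\]
where the last equality uses that, for a weakly reversible network, $\Ab_k$ is block diagonal with column-conserving blocks each of corank one, so $\Img\Ab_k = W^{\perp}$. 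A short rank--nullity computation applied to $\Yb$ restricted to $\Img\Ab_k$, together with $S=\rank(\Yb\Ab_k)$ and $\dim\Img\Ab_k = N-L$, gives $\dim(\Ker\Yb\cap\Img\Ab_k)=N-L-S=\delta$. Consequently $\delta=0$ is equivalent to $\Img\Yb\T+W=\rr^N$, and in that case $\log b$ lies in $\rr^N$ for every choice of $k$, so a complex balanced state exists unconditionally: this proves the ``if'' direction.

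For the converse it remains to show that a weakly reversible network with $\delta>0$ fails to be complex balanced for some $k$. Now $\Img\Yb\T+W$ is a proper subspace, and I would exhibit rates for which $\log b(k)$ escapes it. The point is that, modulo $W$, the tree-constant vector $b(k)$ is genuinely steerable: already a spanning directed cycle within a linkage class forces $\kappa_i b_i$ constant, so $b_i\propto 1/\kappa_i$ and the differences $\log b_i-\log b_j$ --- hence the class of $\log b$ in $\rr^N/W$ --- can be prescribed arbitrarily. Choosing these differences to leave the proper subspace $(\Img\Yb\T+W)/W$ yields a $k$ with no complex balanced state, completing the contrapositive. I expect this last steering step to be the main obstacle: one must verify that varying the rate coefficients moves $b(k)\bmod W$ across a set not contained in the proper subspace, handling strong components that are not a single cycle (for instance by a continuity/perturbation argument on the tree constants), whereas the Laplacian and deficiency identities above are routine linear algebra.
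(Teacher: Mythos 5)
The paper does not prove this statement: it is quoted as the classical complex balancing theorem of Horn, Jackson and Feinberg, with citations to the original papers, so your attempt can only be measured against that classical argument --- which is in fact the route you take. Your reduction of complex balancing to the intersection problem ``positive part of $\Ker \Ab_k$ meets $\exp(\Img \Yb\T)$'', the Matrix--Tree description of $\Ker \Ab_k$ (which correctly forces weak reversibility from even a single complex balanced state), the identities $\Img \Ab_k = W^{\perp}$ (with $W$ the space of vectors constant on each linkage class) and $\dim(\Ker \Yb \cap \Img \Ab_k) = N - L - S = \delta$, and the conclusion that $\delta = 0$ is equivalent to $\Img \Yb\T + W = \rr^N$, are all correct; in particular the sufficiency direction and the necessity of weak reversibility are complete as written. (One step worth making explicit: $\Yb \cdot \Img \Ab_k = \SSS$, which your rank--nullity computation uses, follows because $\Img \Ab_k = W^{\perp}$ contains $\eb_j - \eb_i$ for every reaction $i \to j$ within a linkage class.)

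The genuine gap is exactly where you flagged it: the necessity of $\delta = 0$. You must show that if the network is weakly reversible and $\Img \Yb\T + W$ is a proper subspace, then some choice of rate coefficients puts $\log b(k)$ outside it. Your steering argument is only valid when each linkage class is a single directed cycle (there indeed $b_i \propto 1/\kappa_i$); for a general strongly connected component the tree constants are sums of spanning-tree monomials in $k$, and ``the differences can be prescribed arbitrarily'' is precisely the assertion that needs proof --- a continuity/perturbation appeal cannot deliver it, since continuity only moves $\log b(k)$ locally, while you need its image modulo $W$ to escape a fixed proper subspace. The standard repair inverts the problem: instead of analyzing the range of $k \mapsto b(k)$, fix any positive vector $\vv v$ with $\log \vv v \notin \Img \Yb\T + W$ and \emph{construct} $k$ with $\Ab_k \vv v = \nulb$. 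Each linkage class is strongly connected, hence carries a strictly positive circulation $f$ (sum, over each edge, of the indicator flow of one directed cycle through it); setting $k_{ij} := f_{j \to i}/v_j$ for every reaction $j \to i$ makes inflow equal outflow at each complex, so $\Ab_k \vv v = \nulb$ and the positive kernel vectors are exactly $\vv v \cdot \exp(W)$, none of which lies in $\exp(\Img \Yb\T)$ by the choice of $\vv v$. With this substitution for your steering step, the proof is complete and coincides with the classical one.
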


One might notice that in the Deficiency Zero Theorem, it is the mass action system that is described as complex balanced, not one of its stationary concentrations. This is because complex balancing implies a host of good structural and dynamical properties, including:
\begin{enumerate}
\item 
    If one of the stationary concentrations is complex balanced, then all stationary concentrations of the system is  complex balanced.
\item 
    There is exactly one stationary concentration within every stoichiometric compatibility class.
\item 
    Let $\xb^*$ be any complex balanced stationary concentration. The function
    \begin{align*}
        L(\xb) := \sum_{m=1}^M(\ln x_m - \ln x_m^* -1)
    \end{align*}
defined on $\rr^n_{>0}$ is a Lyapunov function, with global minimum at $\xb = \xb^*$.
\item 
    Every positive stationary concentration is locally asymptotically stable within its stoichiometric compatibility class.
\item 
    The set of complex balanced stationary concentrations $Z_\kb$ satisfies the equation $\ln Z_\kb = \ln \xb^* + \SSS^\perp$. 
\end{enumerate}

\section{Formulation of problems} 

In defining the system of differential equations for a mass action system, note that given a set of reactions and rate coefficients (or equivalently, given a reaction graph and positive weights for each edge), it is straightforward to write down the differential equation. Unfortunately, given a differential equation (satisfying a mild sign condition introduced below), there is in general no unique reaction graph that is associated to it~\cite{craciunpantea2008}. 

A natural question arises: given a differential equation, what are the possible reaction graphs that could generate it if we assume mass action kinetics. All the information about a reaction graph is contained inside $(\Yb, \Ab_k)$.

\begin{definition}
	A \df{realization} of a differential equation (\ref{eq:ikdevecFHJ}) is the pair $(\Yb, \Ab_k)$, where $\Yb \in \zz_{\geq 0}^{M \times N}$ and $\Ab_k$ is the weighted Laplacian of a reaction graph. Equivalently, a realization is a set of complexes and a Feinberg-Horn-Jackson reaction graph. Two different realizations $(\Yb, \Ab_k)$ and $(\Yb', \Ab_k')$ are said to be \df{dynamically equivalent} if they give rise to the same differential equation under mass action kinetics. 
\end{definition}
Dynamical equivalence is written as linear relations in~\cite{craciunjinyu}.  In other words, the question we are asking in this work is: what are the possible dynamically equivalent realizations $(\Yb, \Ab_k)$ of a given system of differential equations? 

\bigskip 

Before searching for dynamically equivalent realizations, we must first ask whether or not a given  differential equation can arise from mass action kinetics. In other words, is it even possible to find one realization? We  provide a necessary and sufficient condition for realizability in Lemma~\ref{lemma:hungarian} below.  

Assume that we are given a system of differential equations of the form 
\begin{equation}\label{eq:polynomial}
\dot{\xb}=\Zb\cdot  \xb^\Yb
=\sum_{n=1}^{N}\zb_n\xb^{\yb_n}
=\left(\begin{array}{c}
        \sum_{n=1}^{N}z_n^1 \xb^{\yb_n}\\
        \vdots \\
        \sum_{n=1}^{N}z_n^m \xb^{\yb_n} \\
        \vdots \\
        \sum_{n=1}^{N}z_n^M \xb^{\yb_n}
      \end{array}\right),
\end{equation}
where $\Zb := (\zb_1, \zb_2,\ldots, \zb_N)$, and $\Yb := (\yb_1, \yb_2,\ldots, \yb_N)$, and $\xb^{\yb} := x_1^{\yb_1}x_2^{\yb_2}\cdots x_M^{\yb_M}$ for any $x_i > 0$. 

\begin{definition}
\label{def:kinetic}
We say that a system of differential equations (\ref{eq:polynomial}) is \df{kinetic} if 
\begin{equation}\label{eq:negativecrosseffect}
 z^m_{n}<0 \quad \text{implies} \quad  (\yb_n)^m=y^m_{n}>0.
\end{equation}
\end{definition}
A system being kinetic reflects the physical assumption that in order to lose a chemical species via a reaction, that species must participate as a reactant. For example, the system 
    \begin{align*}
        \dot{x} &= y - x
        \\
        \dot{y} &= x - xz -y 
        \\
        \dot{z} &= xy - z
    \end{align*}
is not kinetic because the term $-xz$ in $\dot{y}$ does not involve the variable $y$. 

It turns out that a differential equation can arise from mass action kinetics if and only if it is \df{kinetic} in the sense of Definition~\ref{def:kinetic}. This has been shown  in~\cite{harstoth} using a constructive method.
\begin{lemma}[Hungarian Lemma]
\label{lemma:hungarian}
A polynomial system of differential equations of the form \eqref{eq:polynomial} can be realized by a mass action system if and only if the system of differential equations is kinetic.
\end{lemma}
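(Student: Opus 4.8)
The statement is an equivalence, so the plan is to prove two implications. The ``only if'' direction is the easy one: if a system of the form \eqref{eq:polynomial} arises from a mass action network, then its induced kinetic differential equation has the coordinate form \eqref{eq:ikdecoord}. Fixing a coordinate $m$ and a monomial $\xb^{\yb_n}$, the only reactions that can contribute a negative coefficient $z_n^m < 0$ to $\dot{x}_m$ are those whose reactant complex is exactly $\yb_n$ and for which $\beta(m,r) - \alpha(m,r) < 0$; such a reaction has $\alpha(m,r) = (\yb_n)^m > \beta(m,r) \geq 0$, so in particular $(\yb_n)^m > 0$. Summing over contributing reactions cannot change this sign conclusion, because every term that could make $z_n^m$ negative already forces $(\yb_n)^m > 0$. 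Hence the system is kinetic in the sense of Definition~\ref{def:kinetic}.

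The substance is in the ``if'' direction, which I would prove \emph{constructively}: given a kinetic system \eqref{eq:polynomial}, exhibit a reaction network whose induced kinetic differential equation is exactly \eqref{eq:polynomial}. The plan is to build one reaction per (coordinate, monomial) pair in which the monomial actually appears. For each $n$ and each coordinate $m$ with $z_n^m \neq 0$, introduce a reaction with reactant complex $\yb_n$ and whose reaction vector equals $\sign(z_n^m)\,\eb_m$, assigning rate coefficient $|z_n^m|$. Concretely, if $z_n^m > 0$ the reaction is $\yb_n \to \yb_n + \eb_m$, and if $z_n^m < 0$ the reaction is $\yb_n \to \yb_n - \eb_m$. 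The key point is that these product complexes have nonnegative integer entries: in the positive case this is automatic, and in the negative case it is exactly the kinetic condition $(\yb_n)^m > 0$ that guarantees $\yb_n - \eb_m$ still has a nonnegative $m$-th coordinate, so it is a legitimate complex. Reading off \eqref{eq:ikdecoord} for this network, each reaction contributes $\pm|z_n^m|\,\eb_m\,\xb^{\yb_n}$ to $\dot{\xb}$, and summing over all constructed reactions reproduces $\sum_n \zb_n \xb^{\yb_n}$ coordinatewise.

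The main obstacle, and the only place where the hypothesis is truly used, is ensuring that every product complex is a valid complex, i.e.\ lies in $\zz_{\geq 0}^M$; this is precisely what would fail for a non-kinetic system such as the Lorenz-type example in the excerpt, where the term $-xz$ in $\dot{y}$ would demand a reaction consuming $y$ without $y$ appearing as a reactant. I would state the construction cleanly (perhaps tabulating the reactant complex, product complex, and rate coefficient for each nonzero $z_n^m$), verify the nonnegativity of product complexes using \eqref{eq:negativecrosseffect}, and then finish by a direct substitution into \eqref{eq:ikdecoord} to confirm that the induced equation matches \eqref{eq:polynomial} term by term. A minor bookkeeping remark worth including is that distinct reactions may share reactant complexes, and some product complexes may coincide with existing ones, but none of this affects the resulting differential equation, which depends only on the reactant complexes, reaction vectors, and rate coefficients.
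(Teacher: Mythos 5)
Your proposal is correct and follows essentially the same route as the paper: the ``if'' direction is exactly the canonical realization \eqref{eq:canonical}, with the kinetic condition \eqref{eq:negativecrosseffect} used precisely where the paper uses it, to guarantee that the product complexes $\yb_n + \sign(z_n^m)\eb_m$ lie in $\zz_{\geq 0}^M$. The ``only if'' direction, which the paper leaves implicit (deferring to the cited constructive proof), is argued correctly: any negative coefficient of $\xb^{\yb_n}$ in coordinate $m$ must come from a reaction with reactant complex $\yb_n$ and $\beta(m,r)<\alpha(m,r)=(\yb_n)^m$, forcing $(\yb_n)^m>0$.
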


One realization (the \emph{canonical realization}) is the collection of reactions
\begin{equation}\label{eq:canonical}
\yb_n \xrightarrow{|z^m_n|} \yb_n+\sign(z^m_{n})\eb_m \quad(m=1,2,\ldots,M;n=1,2,\ldots,N),
\end{equation}
where $\eb_m$ is the $m$-th vector of the standard basis of $\mathbb{R}^M$.
Here the product complex vector has nonnegative integer coordinates because of the assumption in  \eqref{eq:negativecrosseffect}.

\begin{example}
Given the \de\
\begin{equation}\label{eq:logistic}
\dot{x}=k_1x-k_2x^2
\end{equation}  (with $k_1, \, k_2>0$),  it is possible to find an inducing reaction network with only two complexes defined  by the exponents of the monomials present in the right hand side:
\begin{align*}
\ce{X <=>[$k_1$][$k_2$] 2X}.
\end{align*}
The right hand side contains two monomials, and there exists a realization with two complexes, and is also reversible, and has  deficiency zero. This also happens to be the canonical realization.
\end{example}
However, in general, the canonical realization is  quite complicated, contains too many complexes and reactions (see Supplementary Example \ref{ex:canonical}). Its main advantage is that it can be automatically constructed and used as a starting point for finding different realizations.

\bigskip

Now we present the set of problems we are interested in (also posed earlier in~\cite[Section 4.7]{erditoth}). \\

\noindent 
{\bf List of Problems.}\label{pr:enum}
\vspace{-0.5cm}
\begin{enumerate}
\item[\nonumber]
\item
Given a kinetic differential equation $\dot{\vv x} = \vv Z \cdot   \vv x^{\vv Y}$, find $\Ab_k$ such that $\Yb\cdot   \Ab_k = \Zb$.\label{pr:basic}
\item
Find reversible or weakly reversible; complex balanced or detailed balanced; or mass conserving realizations.
Decide if such a realization exists.
\label{pr:wr}
\item
Find realizations with minimal deficiency, with minimal number of reactions, or with minimal number of complexes.
\item
Without further restrictions, the question about uniqueness of $\Ab_k$ arises at all levels.
\label{pr:optimal}
\item
In all of the above problems, we may add more columns to $\Yb$~\cite{szederkenyiComment}. For example, if the first problem has no solution, we may consider the following: find $\overline{\Yb}$ whose first $N$ columns
form precisely $\Yb$ and find $\Ab_k$ such that $\overline{\Yb} \cdot  \Ab_k\cdot   \vv
x^{\overline{\Yb}} = \Zb \cdot  \vv x^{\Yb}$. In this case we can also ask what is the minimum number of columns that must be added.
\label{pr:enlarge}

\end{enumerate}

We will answer partially the questions above, especially pertaining to reversible, weakly reversible, complex balanced and detailed balanced realizations. We also consider uniqueness in Section~\ref{sec:uniqueness} and mass conserving realizations in the Supplementary.

\bigskip 

In considering the uniqueness question, the concepts of sparse and dense realizations have been useful~\cite{szederkenyi}. 
The aim is to find an $M\times N$ matrix of nonnegative integer components $\Yb$ and a Kirchoff matrix $\Ab_k$ giving the product $\Zb=\Yb\cdot  \Ab_k$.
The pair $(\Yb, \Ab_k)$ is a  realization of the
matrix $\Zb$. Since $(\Yb, \Ab_k)$ uniquely identifies the reaction graph, when we refer to a subgraph of the realization, we mean a subgraph of the reaction graph corresponding to $(\Yb, \Ab_k)$.
\begin{definition}\label{def:sparsedense}
Suppose the set of complexes is fixed, i.e., the matrix $\Yb$ is given. If the number of zeros in the above setting in $\Ab_k$ is maximal, then a \df{sparse realization} has been obtained. If the number of zeros in $\Ab_k$ is minimal then a \df{dense realization} has been obtained.
\end{definition}
 Suppose $\Zb$ is given and fix the matrix $\Yb$, i.e., fix the set of complexes (both the reactant and product complexes) from which we will search for realizations. The following properties of dense and sparse realizations of a given chemical reaction network were proved in~\cite{szederkenyihangospeni}:
\begin{theorem}\label{th:realizations}
\begin{enumerate}
\item[\nonumber]
\item
The reaction graph of the dense realizations is unique.
\item
The reaction graph of any dynamically equivalent realization is a subgraph
of the dense realization.
\item
The reaction graph of a system of differential equations is unique if and only if the reaction graph of the dense and sparse realizations are identical.
\end{enumerate}
\end{theorem}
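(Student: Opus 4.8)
The plan is to exploit the fact that, once $\Zb$ and $\Yb$ are fixed, the collection of admissible weighted Laplacians is a convex set, and then to read off all three statements from this convexity together with the elementary observation that the support of a convex combination with strictly positive weights is the union of the supports of its terms. Let $\mc{P}$ denote the set of all Kirchhoff matrices $\Ab_k$ satisfying $\Yb\cdot\Ab_k=\Zb$. The conditions defining $\mc{P}$ are the linear equations $\Yb\cdot\Ab_k=\Zb$, the column-conservation equations (which in particular pin down the diagonal in terms of the off-diagonal entries), and the inequalities $[\Ab_k]_{ij}\ge 0$ for $i\ne j$; all of these are affine or convex, so $\mc{P}$ is a convex polyhedron, nonempty because the given network furnishes at least one realization. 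For $\Ab_k\in\mc{P}$ write $\sigma(\Ab_k):=\{(i,j):i\ne j,\ [\Ab_k]_{ij}>0\}$ for its \emph{support}, i.e. the directed edges of its reaction graph.

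For part (1) I would set $E:=\bigcup_{\Ab_k\in\mc{P}}\sigma(\Ab_k)$, the set of edges occurring in at least one realization, and show this is exactly the dense graph. For each $(i,j)\in E$ pick a witness $\Ab_k^{(i,j)}\in\mc{P}$ with $[\Ab_k^{(i,j)}]_{ij}>0$ and form $\bar\Ab:=|E|^{-1}\sum_{(i,j)\in E}\Ab_k^{(i,j)}$. By convexity $\bar\Ab\in\mc{P}$; since every summand has nonnegative off-diagonal entries and the term $\Ab_k^{(i,j)}$ is strictly positive in position $(i,j)$, we get $[\bar\Ab]_{ij}>0$ for every $(i,j)\in E$, while positions outside $E$ vanish in every realization and hence in $\bar\Ab$. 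Thus $\sigma(\bar\Ab)=E$. As $\sigma(\Ab_k)\subseteq E$ for all $\Ab_k\in\mc{P}$ by construction, $\bar\Ab$ has the largest possible support, so any dense realization must have support exactly $E$. Hence the dense reaction graph is unique, namely $E$, even when the numerical weights realizing it are not.

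Part (2) is then immediate from the definition of $E$: every $\Ab_k\in\mc{P}$ obeys $\sigma(\Ab_k)\subseteq E$, so the reaction graph of any dynamically equivalent realization is a subgraph of the dense realization. For part (3) I would argue through the edge count $|\sigma(\Ab_k)|$, the number of nonzero off-diagonal entries: a sparse realization minimizes it over $\mc{P}$, and the dense realization attains the maximum $|E|$. If the dense and sparse reaction graphs coincide, then the minimum and maximum of $|\sigma(\Ab_k)|$ both equal $|E|$; combined with $\sigma(\Ab_k)\subseteq E$, an equality $|\sigma(\Ab_k)|=|E|$ forces $\sigma(\Ab_k)=E$, so every realization has the same graph and the reaction graph is unique. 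The converse is trivial, since if the reaction graph is unique the sparse and dense realizations share it.

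The main obstacle is not computational depth but keeping straight the distinction between uniqueness of the reaction \emph{graph} (the support $E$) and uniqueness of the numerical Laplacian: the face of $\mc{P}$ on which all edges of $E$ are active may well contain many matrices, which is precisely why the statement speaks of dense realization\emph{s} in the plural yet a unique graph. The averaging construction is exactly the device that isolates the graph-level conclusion from this weight-level non-uniqueness, so I would make sure that step, and the passage from $|\sigma(\Ab_k)|=|E|$ with $\sigma(\Ab_k)\subseteq E$ to $\sigma(\Ab_k)=E$, are spelled out carefully.
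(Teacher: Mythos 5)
Your proposal is correct. Note that the paper does not actually prove this theorem itself---it imports it from the cited reference (Szederk\'enyi--Hangos--P\'eni)---but the argument used there is essentially the one you give: the set of admissible Kirchhoff matrices for fixed $\Yb$, $\Zb$ is convex, and a convex combination with strictly positive weights of realizations is again a realization whose support is the union of the supports, which forces the dense graph to be the union of all realizable edge sets and yields parts (2) and (3) as corollaries. The one ingredient you use as a stepping stone, namely that convex combinations of realizations are dynamically equivalent realizations, is in fact recorded separately in the paper (the theorem on positive and convex combinations in Section 5.2.1), so your write-up effectively reconstructs the cited proof from material the paper already contains; your care in distinguishing uniqueness of the support $E$ from non-uniqueness of the weights on the maximal face is exactly the right point to emphasize.
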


The above statements hold when the word ``realization'' is replaced by \emph{weakly reversible realization}~\cite{Acs2015}. 
The above results were extended to the case of \emph{constrained} realizations, where the rate coefficients fulfil arbitrary polytopic constraints~\cite{Acs2018}. An important special case of this is when a subset of possible reactions is excluded from the network. 

We have seen that the canonical representation may be the sparse realization.
 Later, we shall give a complete solution to a generalization of part \ref{pr:basic} of Problem \ref{pr:enum} for the case when the numerical values
of the components of the matrices $\Zb$ and $\Yb$ are known.
We also review results for the solution of the other related problems in Section~\ref{sec:algorithms}.

To date, the problem of finding a realization has used tools and techniques from different mathematical disciplines, ranging from graph theory, combinatorics, linear algebra, to optimization.

\section{Realizations of compartmental models and reversible systems}

In this section, we consider two classes of examples: compartmental models and weakly reversible networks. Because of the simpler mathematical structure of compartmental models, the realization problem can be solved even if the rate coefficients are unspecified parameters, i.e., symbolic. Then we will look at an \emph{ad hoc} method to find reversible realizations.

\subsection{Compartmental Models}

A \df{compartmental system} is built using a subset of reactions of the form
\begin{align*}
\ce{X($m$) -> X($p$)},\quad\ce{X($n$) -> 0},\quad\ce{0 -> X($q$)}.
\end{align*}
It is \df{closed} if it only contains reactions of the first type,
it is \df{half-open} if reactions of the second type are also allowed,
and it is \df{open} if reactions of the third type may occur.
A \df{generalized compartmental system} only consists of the reactions
\begin{align*}
\ce{$y_m$X($m$) -> $y_p$X($p$)},\quad\ce{$y_n$X($n$) -> 0},\quad\ce{0 -> $y_q$X($q$)}
\end{align*}
with fixed positive integer numbers $y_1, y_2, \ldots, y_N$.
(It is understood that all the complexes contain a single species, and every species appear in a single complex.)
It is \df{closed}, if it only contains reactions of the first type,
it is \df{half-open} if reactions of the second type are also allowed,
and it is \df{open}, if reactions of the third type also may occur.

\bigskip

First we give a complete, although ineffective solution of the
first task of Problem \ref{pr:enum} in a slightly modified form.
Modification means that the empty complex will be dealt with separately,
because in some calculations this implies simplification.
First, we need two definitions.
\begin{definition}\cite[pp. 142--143]{szederkenyimagyarhangos}
A matrix $\Bb\in\R^{N\times N}$ is a \df{compartmental matrix}, if its off-diagonal elements are nonnegative, while its diagonal elements are smaller than or equal to the negative of the corresponding column sums: $b_{nn}\le-\sum_{p=1}^Nb_{pn}$. It is a \df{Kirchhoff matrix} if it is a compartmental matrix and its diagonal elements are equal to the negative of the corresponding column sums: $b_{nn}=-\sum_{ p\neq n}b_{pn}$.
\end{definition}
\noindent 
For example, the weighted Laplacian matrix of a reaction graph is Kirchoff. 


Let $M,N\in\N;$ and suppose the vectors $\yb_1,\yb_2,\dots,\yb_{N}\in \zz_{\geq 0}^M$
are arbitrary nonzero vectors.
Let us write the reaction network we are interested in, in the form of
\begin{align}\label{eq:general1}
&\sum_{m=1}^{M}y_q^m\mbox{X}(m)\rightarrow\sum_{m=1}^{M}y_n^m\mbox{X}(m)\quad(n,q=1,2,\dots,N;n\neq q),\\
&\sum_{m=1}^{M}y_q^m\mbox{X}(m)\rightarrow0\quad(q=1,2,\dots,N),\label{eq:general2}\\
&0\rightarrow\sum_{m=1}^{M}y_n^m\mbox{X}(m)\quad(n=1,2,\dots,N).\label{eq:general3}
\end{align}
Let us introduce the notation (slightly different from the usual)
$$
\Kb:=\left(\begin{array}{c}
            k_{nq}
          \end{array}\right)_{n\neq q=1}^N,
\quad
\ub:=\left(\begin{array}{c}
            k_{01}\\
            k_{02}\\
            \vdots\\
            k_{0N}
          \end{array}\right),
\quad
\vb:=\left(\begin{array}{c}
            k_{10}\\
            k_{20}\\
            \vdots\\
            k_{N0}
          \end{array}\right),
$$
where $\Ab_k := \Kb - \diag(\Kb^T \oneb)$ is the weighted Laplacian of the reaction network. 
Note that  $k_{nq} \geq 0$.  Then, the induced kinetic differential equation of the reaction network
\eqref{eq:general1}--\eqref{eq:general3}
is
\begin{equation}
\dot{\cb}=\Yb(\Kb-\diag(\Kb\T\oneb-\ub))\cb^{\Yb}+\Yb\vb
\quad\mbox{ with }(\Yb=(\yb_1,\yb_2,\dots,\yb_N)).\label{eq:ikdegen}
\end{equation}
Upon denoting $\Zb:=\Yb(\Kb-\diag(\Kb\T\oneb-\ub)), \bb:=\Yb\vb$ one can say that
\eqref{eq:ikdegen} is of the form
\begin{equation}\label{eq:inducedgen}
\dot{\cb}=\Zb\cb^{\Yb}+\bb
\end{equation}
so that there exists a compartmental matrix $\Bb\in\R^{N\times N}$
and a nonnegative vector $\vb\in \rr_{\geq 0}^M$ so that
\begin{equation}
\Zb=\Yb\Bb,\quad\bb=\Yb\vb\label{eq:condition}
\end{equation}
hold. If $\ub=\nulb$, then $\Bb$ is a Kirchhoff matrix.
(One may call \eqref{eq:general2} \emph{outflow},
\eqref{eq:general3} \emph{inflow}, and they are missing if $\ub=\nulb$ or $\vb=\nulb$, respectively.)

A partial answer to the realization problem follows.
\begin{theorem}\label{thm:general}
Suppose that the differential equation
\begin{equation}\label{eq:tobeinduced}
\dot{\xb}=\Zb\xb^{\Yb}+\bb
\end{equation}
where
$\Yb=(\yb_1,\yb_2,\dots,\yb_N),\Zb\in\R^{M\times N}$ (with $M,N\in\N$)
has the property that there exist a compartmental matrix $\Bb$
and  a nonnegative vector $\vb\in \rr_{\geq 0}^M$ so that \eqref{eq:condition} holds. Then the equation has a realization of the form
\eqref{eq:general1}--\eqref{eq:general3}.
\end{theorem}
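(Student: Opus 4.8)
The plan is to prove this constructively: starting from the hypothesized factorization $\Zb=\Yb\Bb$ with $\Bb$ compartmental and $\bb=\Yb\vb$ with $\vb$ nonnegative, I will read the rate coefficients of a network of the form \eqref{eq:general1}--\eqref{eq:general3} directly off the entries of $\Bb$ and $\vb$, and then verify that its mass-action induced differential equation is exactly \eqref{eq:tobeinduced}. Concretely, for each ordered pair $n\neq q$ I assign to the inter-complex step $\yb_q\to\yb_n$ in \eqref{eq:general1} the rate coefficient $k_{nq}:=b_{nq}$; to each outflow step $\yb_q\to 0$ in \eqref{eq:general2} the rate $k_{0q}:=-b_{qq}-\sum_{n\neq q}b_{nq}$; and to each inflow step $0\to\yb_n$ in \eqref{eq:general3} the rate $k_{n0}:=v_n$. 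Any reaction whose assigned coefficient is $0$ is simply dropped from the network, so positivity of the retained coefficients is automatic.

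The one place the hypothesis is used in an essential way is the nonnegativity of the outflow rates. Since $\Bb$ is compartmental, its off-diagonal entries are nonnegative, giving $k_{nq}=b_{nq}\geq 0$; and the defining column-sum inequality $b_{qq}\leq-\sum_{n\neq q}b_{nq}$ is \emph{precisely} the statement $k_{0q}\geq 0$. (Conversely, if $\Bb$ failed to be compartmental, some $k_{0q}$ would be negative and no network of this form could be produced, which is why the compartmental hypothesis is exactly the right one.) Once nonnegativity is secured, I assemble the induced ODE by summing the three reaction contributions: the step $\yb_q\to\yb_n$ contributes $(\yb_n-\yb_q)k_{nq}\cb^{\yb_q}$, the outflow $\yb_q\to 0$ contributes $-\yb_q k_{0q}\cb^{\yb_q}$, and the inflow $0\to\yb_n$ contributes the constant $\yb_n k_{n0}$ since $\cb^{\nulb}=1$.

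Collecting the monomial terms, the coefficient of $\cb^{\yb_q}$ is $\sum_{n\neq q}\yb_n k_{nq}-\bigl(\sum_{n\neq q}k_{nq}+k_{0q}\bigr)\yb_q$, which is the $q$-th column of $\Yb\Ab$ for the matrix $\Ab$ with off-diagonal entries $[\Ab]_{nq}=k_{nq}$ and diagonal entries $[\Ab]_{qq}=-\bigl(\sum_{n\neq q}k_{nq}+k_{0q}\bigr)$. Substituting the chosen values gives $[\Ab]_{nq}=b_{nq}$ and $[\Ab]_{qq}=b_{qq}$, so $\Ab=\Bb$; hence the monomial part is $\Yb\Bb\,\cb^{\Yb}=\Zb\,\cb^{\Yb}$. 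The constant part is $\sum_{n}\yb_n k_{n0}=\Yb\vb=\bb$. Therefore the constructed network induces exactly \eqref{eq:tobeinduced}, which is the desired realization.

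I do not expect any obstacle beyond this sign/column-sum bookkeeping: the theorem is really the ``easy direction,'' in that the genuinely difficult and here \emph{assumed} part is the existence of a compartmental $\Bb$ and nonnegative $\vb$ satisfying \eqref{eq:condition}. This is why the solution is complete but ineffective — the construction is immediate once the factorization is granted, while deciding whether such a factorization exists is left open at this stage.
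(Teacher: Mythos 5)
Your proof is correct and follows essentially the same route as the paper's: both read the rate coefficients directly off $\Bb$ and $\vb$ (inter-complex rates from the off-diagonal entries, outflow rates from the negative column sums, inflow rates from $\vb$), with the compartmental inequality supplying nonnegativity of the outflow coefficients. The only difference is cosmetic: the paper states the final check as ``simple verification'' (and adds a preliminary remark that the hypotheses force the equation to be kinetic), whereas you carry that verification out explicitly by showing the assembled Kirchhoff-type matrix equals $\Bb$.
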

\begin{proof}
First we prove that the conditions of the theorem imply the differential equation is kinetic.
Suppose $Z^m_n<0$ for some indices $m,n$.
Then,
$$
Z^m_n=\sum_{q=1}^Ny_q^mB_n^q=\sum_{q\neq n}^Ny_q^mB_n^q+y_n^mB_n^n,
$$
and the last sum being nonnegative, $y_n^mB_n^n$ should be negative, thus $y_n^m>0$.

The components of $\Yb$ and $\vb$ are nonnegative, thus the components of $\bb=\Yb\vb$ are nonnegative, as well.

Next, we construct the inducing realization in the following way.
Let the reaction rate coefficients of the reaction network \eqref{eq:general1}--\eqref{eq:general3}  be defined as follows:
\begin{align*}
&k_{qn}:=B^q_{n}\quad(n,q=1,2,\dots,N;n\neq q),\\
&k_{n0}:=v_n\quad(n=1,2,\dots,N),\\
&k_{0n}:=-B^n_{n}-\sum_{q=1}^NB^q_{n}\quad(n=1,2,\dots,N).
\end{align*}
Then simple verification proves the statement.
\end{proof}
It may happen that a given kinetic differential equation has zero, one or an infinite number of realizations with complexes, or columns of $\Yb$, determined by the exponents of its monomials, see Supplementary Examples
\ref{ex:fourcomplexes}, \ref{ex:yinverse} and \ref{ex:opengcc} and Example \ref{ex:ThreeD}, respectively.
\bigskip

For some nonlinear \de s one can provide a zero deficiency realization.
Additional conditions may allow to have a realization which is also reversible or weakly reversible. One of the early approaches to this problem was~\cite[Theorem 9]{tothsztaki}, see also~\cite[Theorem 11.12]{tothnagypapp}  providing a necessary and sufficient condition that a generalized compartmental system induces a given differential equation. If the coefficients of the kinetic polynomial ODEs are known, then a deficiency zero realization can be determined via mixed integer linear programming, if it exists~\cite{Liptak2015}. Moreover, weakly reversible deficiency zero realizations can be determined using simple linear programming~\cite{szederkenyihangos}.
\begin{theorem}\label{th:gencomp}
Let $M\in\N; \Zb\in\R^{M\times M}, \Yb\in\N^{M\times M}, \bb\in \rr_{\geq 0}^M$.
The differential equation
\begin{equation}\label{eq:gencomp}
  \dot{\xb}=\Zb\xb^{\Yb}+\bb
\end{equation}
is the \ikde\ of a
\begin{enumerate}
\item
closed,
\item
half open,
\item
open
\end{enumerate}
\emph{generalized compartmental system} if and only if the following relations hold:
all the columns of $\Yb$ are positive multiples of different elements of the standard basis:
$$
\Yb=
\begin{bmatrix}
         y_1\eb_1 & y_2\eb_2 & \dots & y_M\eb_M
\end{bmatrix}; y_1,y_2,\dots,y_M\in\N;
$$
and
\begin{enumerate}
\item
$b_m=0; -z^m_{m},z^m_{p},\beta_m:=\sum_{p=1}^M\frac{z^p_{m}}{y_p} \geq 0; z^m_{m}=-\beta_my_m;$
\item
$b_m=0; -z^m_{m},z^m_{p},\beta_m \geq 0; z^m_{m}\le-\beta_my_m, \exists m:z^m_{m}<-\beta_my_m;$
\item
$-z^m_{m},z^m_{p},\beta_m \geq 0; z^m_{m}\le-\beta_my_m, \exists m:b_{m}>0;$
\end{enumerate}
respectively, where throughout $m,p\in\{1,2,\dots,M\}, m\neq p$.
\end{theorem}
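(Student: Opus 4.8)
The plan is to translate the structural definition of a generalized compartmental system directly into algebraic constraints on $(\Yb,\Zb,\bb)$ by writing out its induced kinetic differential equation component by component and matching coefficients, after which all three cases fall out from the single requirement that the rate coefficients be nonnegative.

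First I would pin down the form of $\Yb$. By definition every complex of a generalized compartmental system is a single-species complex $y_m X(m)$ with the species distinct across complexes, so the only monomials that can occur on the right-hand side are $x_m^{y_m}$, and the columns of $\Yb$ are forced to be $y_m\eb_m$ with $y_m\in\N$. This is exactly the stated condition on $\Yb$, and it is necessary before $\Zb$ and $\bb$ can even be compared with \eqref{eq:gencomp}; conversely, once $\Yb$ has this form the system \eqref{eq:general1}--\eqref{eq:general3} has precisely the reactant monomials $x_q^{y_q}$, so the two directions of the $\Yb$-part of the equivalence are immediate.

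Next I would compute the induced kinetic differential equation explicitly. Writing $k_{pq}$ for the rate of the transfer $y_q X(q)\to y_p X(p)$, $k_{0m}$ for the outflow rate of $y_m X(m)\to 0$, and $k_{m0}$ for the inflow rate of $0\to y_m X(m)$, the $m$-th equation reads
\begin{equation*}
\dot{x}_m = y_m\sum_{q\neq m}k_{mq}x_q^{y_q} - y_m\Bigl(\sum_{p\neq m}k_{pm}+k_{0m}\Bigr)x_m^{y_m} + y_m k_{m0}.
\end{equation*}
Matching this against $\dot{x}_m=\sum_n z_n^m x_n^{y_n}+b_m$ yields the dictionary $z_q^m=y_m k_{mq}$ for $q\neq m$, $\ b_m=y_m k_{m0}$, and $z_m^m=-y_m\bigl(\sum_{p\neq m}k_{pm}+k_{0m}\bigr)$. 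Solving gives $k_{mq}=z_q^m/y_m$ and $k_{m0}=b_m/y_m$; recognizing $\sum_{p\neq m}k_{pm}=\sum_{p\neq m}z_m^p/y_p=\beta_m$ as the total transfer rate out of compartment $m$, the outflow rate is $k_{0m}=-z_m^m/y_m-\beta_m$. Since every $y_m>0$, demanding that all rate coefficients be nonnegative is equivalent to: all off-diagonal entries of $\Zb$ are nonnegative (the condition $z_p^m\ge 0$, which forces $\beta_m\ge 0$); $b_m\ge 0$; and $k_{0m}\ge 0\iff z_m^m\le -\beta_m y_m$, with equality $z_m^m=-\beta_m y_m$ precisely when $k_{0m}=0$.

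Finally I would read off the three cases according to which reaction types are switched on. A closed system forbids both inflow and outflow, forcing $b_m=0$ and $k_{0m}=0$, i.e. $b_m=0$ with $z_m^m=-\beta_m y_m$; a half-open system additionally permits outflow ($k_{0m}\ge 0$, at least one positive), giving $b_m=0$ and $z_m^m\le -\beta_m y_m$ with strict inequality for some $m$; an open system also permits inflow ($b_m\ge 0$, at least one positive), giving $z_m^m\le -\beta_m y_m$ together with $b_m>0$ for some $m$. In each case the forward direction is the computation above applied to a given realization, and the reverse direction uses the explicit formulas for $k_{pq},k_{0m},k_{m0}$ to construct a realization inducing \eqref{eq:gencomp}, so the three stated equivalences follow. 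I expect the only genuine obstacle to be bookkeeping rather than mathematics: keeping the row/column (superscript/subscript) conventions straight, correctly identifying $\beta_m$ as the total outgoing transfer rate so that the diagonal relation carries the right sign, and matching each existence clause ``$\exists m$'' to the presence of at least one outflow, respectively inflow, reaction.
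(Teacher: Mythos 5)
Your proof is correct, but it takes a more self-contained route than the paper's. The paper disposes of the theorem in two sentences: necessity is declared ``obvious'', and for sufficiency it invokes Theorem~\ref{thm:general}, observing that here $\Yb$ is diagonal with positive entries, hence invertible, so that the compartmental matrix $\Bb=\Yb^{-1}\Zb$ and the inflow vector $\vb=\Yb^{-1}\bb$ can be computed explicitly and checked against the definition of a compartmental (respectively Kirchhoff) matrix. You instead bypass Theorem~\ref{thm:general} entirely and redo the underlying computation from scratch: writing out the induced kinetic differential equation of a generalized compartmental system, matching coefficients, and solving for the rate coefficients $k_{mq}=z^m_q/y_m$, $k_{m0}=b_m/y_m$, $k_{0m}=-z^m_m/y_m-\beta_m$. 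These formulas are exactly the entries of $\Yb^{-1}\Zb$ and $\Yb^{-1}\bb$ in disguise, so the mathematical content coincides; what your version buys is that both directions of the equivalence and, more importantly, the three-way case distinction (closed/half-open/open, via $k_{0m}=0$ for all $m$, some $k_{0m}>0$, some $k_{m0}>0$) are made fully explicit, whereas the paper's proof leaves all of this to the reader. Two details are worth noting: you correctly (and tacitly) read $\beta_m$ as $\sum_{p\neq m} z^p_m/y_p$ --- had the sum run over all $p$, the quantity would equal $-k_{0m}$ and the stated conditions would fail for any system with actual outflow, so the theorem's closing clause ``throughout $m\neq p$'' is essential and your reading is the right one --- and your matching of each ``$\exists m$'' clause to the presence of at least one outflow, respectively inflow, reaction is precisely the bookkeeping that separates the three cases.
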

\begin{proof}
Necessity is obvious. Instead of direct calculations
sufficiency can be learned from the application of Theorem \ref{thm:general}, as in this case $\Yb$ -- being diagonal with positive components -- is invertible, and its inverse can easily be calculated.
\end{proof}
\begin{remark}
\begin{enumerate}
\item[\nonumber]
\item
A necessary and sufficient condition of reversibility in the closed case is sign symmetry of the coefficient matrix. Similar conditions can be formulated for the other two cases, as well.
\item
Once we have a zero deficiency reversible realization, it is easy to check detailed balancing: only the circuit conditions~\cite{feinbergdb} are to be checked.
\end{enumerate}
\end{remark}
\bigskip 

\subsection{Finding reversible realizations}
Let us study reversibility, another relevant property, in more detail,
applying different methods---without taking care of the value of deficiency.

An equivalent and useful characterization of the \ikde\ of reversible reaction networks seems to be missing.
What we only have is a heuristic algorithm without the proof that it leads to a reversible realization if there exists one, and the realization is independent from the order of the steps.
\begin{example}\label{ex:ThreeD}
\begin{enumerate}
\item[\nonumber]
\item
Consider the polynomial equation
\begin{align}\label{eq:ThreeD}
\dot{x}&=b + s - g x + s z - g x z\\
\dot{y}&=b + s + s x - g y - g x y\\
\dot{z}&=b + s + s y - g z - g y z\label{eq:ThreeD3}
\end{align}
with $b,g,s>0$ (a transform of the first repressilator model in~\cite{dukaricerramijeralalebarromanovskitothweber}).
The canonical representation of this equation is shown in Figure~\ref{fig:ThreeD}. It is a reversible reaction network with exactly those seven complexes which are given as exponents of the monomials. It consists of a single linkage class and has a three-dimensional stoichiometric space, and it is also a sparse realization.
Reversibility allows us to deduce the existence of a positive stationary point by the theorems of~\cite{borosPosSteadyStates,orlovrozonoer2}.
In the third part below a more systematic approach to this example will show
an infinite number of realizations (not necessarily reversible), actually we shall receive the dense realization.
\begin{figure}[!ht]
  \centering
  \begin{tikzcd}[ampersand replacement=\&,column sep=small]
    \cf{X+Z} \arrow[rd,rightharpoonup,yshift=2pt,"g"] \& \& \& \& \cf{X+Y} \arrow[ld,rightharpoonup,yshift=-2pt,"g"] \\
    \& \cf{Z} \arrow[rd,rightharpoonup,yshift=2pt,"g",near start] \arrow[lu,rightharpoonup,yshift=-2pt,"s"] \& \& \cf{X} \arrow[ld,rightharpoonup,yshift=-2pt,"g"] \arrow[ru,rightharpoonup,yshift=2pt,"s"] \& \\
    \& \& \cf{0} \arrow[lu,rightharpoonup,yshift=-2pt,"b+s"] \arrow[ru,rightharpoonup,yshift=2pt,"b+s"] \arrow[d,rightharpoonup,xshift=1.5pt,"b+s"] \& \& \\
    \& \& \cf{Y} \arrow[d,rightharpoonup,xshift=1.5pt,"s"] \arrow[u,rightharpoonup,xshift=-1.5pt,"g"] \& \& \\
    \& \& \cf{Y+Z} \arrow[u,rightharpoonup,xshift=-1.5pt,"g"] \& \&
  \end{tikzcd}
  \caption{A reversible single linkage class reaction network of deficiency 3 to induce Eq.~\eqref{eq:ThreeD}-\eqref{eq:ThreeD3}. This realization is sparse as opposed to the dense realization shown in Figure~\ref{eq:dense}.}\label{fig:ThreeD}
\end{figure}
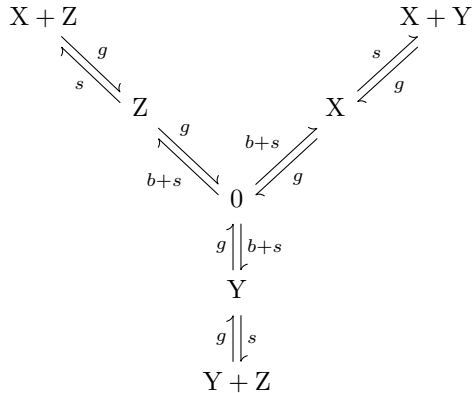
\item
To find a reversible realization one can also use a heuristic:
one tries to collect pairs of reactant and product complex vectors.
Suppose our goal is to find a reversible reaction network inducing \eqref{eq:ThreeD}--\eqref{eq:ThreeD3}.
Let us rewrite it in the following form
\begin{equation}\label{eq:rewrite301}
\left(\begin{array}{c}
  b + s - g x + s z - g x z \\
  b + s + s x - g y - g x y \\
  b + s + s y - g z - g y z
\end{array}\right)=
gxz\left(\begin{array}{c}
  -1 \\
  0 \\
  0
\end{array}\right)+\ldots.
\end{equation}
Now we consider the exponent of the monomial as a reactant vector $\alphab$ and
the vector with which this monomial has been multiplied, as the reaction vector $\betab-\alphab$, then one should have
$
\betab=
\left(\begin{array}{c}  -1 \\  0 \\  0\end{array}\right)+
\left(\begin{array}{c}  1 \\  0 \\  1\end{array}\right)=
\left(\begin{array}{c}  0 \\  0 \\  1\end{array}\right),
$
thus we need a term where the exponent of the monomial is $\betab$,
and it is multiplied by a positive constant and $\alphab-\betab$.
Thus we get:
\begin{equation}\label{eq:rewrite302}
\left(\begin{array}{c}
  b + s - g x + s z - g x z \\
  b + s + s x - g y - g x y \\
  b + s + s y - g z - g y z
\end{array}\right)=
gxz\left(\begin{array}{c}  -1 \\  0 \\  0\end{array}\right)+
sz\left(\begin{array}{c}  1 \\  0 \\  0\end{array}\right)+
\ldots.
\end{equation}
Repeating steps of this kind we arrive at the decomposition
\begin{align*}
gxz&\left(\begin{array}{c}  -1 \\  0 \\  0\end{array}\right)+
sz\left(\begin{array}{c}   1 \\  0 \\  0\end{array}\right)+
gyz\left(\begin{array}{c}  0 \\  0 \\  -1\end{array}\right)+
sy\left(\begin{array}{c}  0 \\  0 \\  1\end{array}\right)+\\
gxy&\left(\begin{array}{c}  0 \\  -1 \\  0\end{array}\right)+
sx\left(\begin{array}{c}  0 \\  1 \\  0\end{array}\right)+
gx\left(\begin{array}{c} -1 \\  0 \\  0\end{array}\right)+
(b+s)\left(\begin{array}{c}  1 \\  0 \\ 0\end{array}\right)+\\
gy&\left(\begin{array}{c} 0 \\ -1 \\  0\end{array}\right)+
(b+s)\left(\begin{array}{c}  0 \\  1 \\ 0\end{array}\right)+
gz\left(\begin{array}{c} 0 \\  0 \\  -1\end{array}\right)+
(b+s)\left(\begin{array}{c}  0 \\  0 \\ 1\end{array}\right)
\end{align*}
which again leads to the network in Figure~\ref{fig:ThreeD}.
\begin{conjecture}
If there exists a reversible realization then the above process (independently from the order of actions) ends and necessarily provides one of them which may or may not depend on the order of steps.
\end{conjecture}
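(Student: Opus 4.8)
The statement is a conjecture about the heuristic of the second part of Example~\ref{ex:ThreeD}, so the first task is to turn that heuristic into a precisely defined (nondeterministic) procedure. I would phrase it as a reduction on a \emph{residual} vector field $\dot{\xb}=\mbf{R}\,\xb^{\Yb}$, initialized to the given right hand side $\Zb\xb^{\Yb}$. One \emph{action} consists of selecting a monomial $\xb^{\alphab}$ whose residual coefficient $\mbf{r}_{\alphab}$ is nonzero, writing $\mbf{r}_{\alphab}=\sum_j c_j(\betab_j-\alphab)$ with $c_j>0$ and each $\betab_j\in\zz_{\geq 0}^M$, recording for each $j$ the reversible pair $\alphab\rightleftharpoons\betab_j$, and then subtracting from $\mbf{R}$ the forward contribution $c_j\xb^{\alphab}(\betab_j-\alphab)$ together with a backward contribution $c_j'\,\xb^{\betab_j}(\alphab-\betab_j)$ with $c_j'>0$. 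The process \emph{ends} when $\mbf{R}=\vv 0$, at which point the recorded pairs with their accumulated forward and backward rates define a symmetric reaction graph, i.e.\ a reversible realization. The content of the conjecture is then: if \emph{some} reversible realization exists, every maximal sequence of actions is finite and terminates with $\mbf{R}=\vv 0$.

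Granting termination with $\mbf{R}=\vv 0$, correctness is immediate, and I would dispatch it first. By construction every recorded reaction was adjoined together with its reverse, so the resulting graph is symmetric; and since the sum of all recorded forward and backward contributions equals $\Zb\xb^{\Yb}-\mbf{R}=\Zb\xb^{\Yb}$, the collected pair $(\Yb',\Ab_k')$ realizes the given equation. Hence all the real work lies in two claims: (i) while $\mbf{R}\neq\vv 0$ an admissible action always exists, so the process never gets stuck, and (ii) the process terminates.

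For both (i) and (ii) I would fix, using the hypothesis, one reversible realization $(\Yb^\ast,\Ab_k^\ast)$ with complex set $\mc C^\ast$ and symmetric reaction graph $G^\ast$, and maintain the invariant that the residual is at all times the vector field of a reversible mass action system \emph{dominated} by $(\Yb^\ast,\Ab_k^\ast)$: supported on edges of $G^\ast$, with rates no larger than those of $\Ab_k^\ast$. Under this invariant, non-stuckness (i) holds because the admissible reactant complexes and reaction vectors of a nonzero residual can be read directly off the dominating realization. For termination (ii) I would take each action maximally, using up the residual coefficient along each chosen reaction vector; then every action zeroes at least one incidence (reactant complex, reaction vector) of the residual while, by domination, creating none outside the finite edge set $G^\ast$. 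Since the number of such incidences is bounded by the number of edges of $G^\ast$, the process must stop; in particular no complex outside $\mc C^\ast$ is ever needed, so the complex set stays finite. The bookkeeping that a maximal dominating realization exists and contains all others can borrow from Theorem~\ref{th:realizations} and its weakly reversible extension.

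The hard part---and where the invariant above is genuinely nontrivial---is exactly the clause \emph{independently from the order of actions}. A careless decomposition $\mbf{r}_{\alphab}=\sum_j c_j(\betab_j-\alphab)$ or a careless backward rate $c_j'$ can introduce an edge or a complex lying outside $G^\ast$ and $\mc C^\ast$, breaking domination and a priori letting the complex set grow without bound. Establishing the conjecture therefore requires either restricting the admissible actions to those that keep the residual kinetic and realizable (a condition the heuristic tacitly intends), or proving a confluence/exchange lemma: any admissible choice can be completed so as to remain dominated by \emph{some} reversible realization. This exchange property is where I expect the main difficulty to sit. Finally the qualifier \emph{may or may not depend on the order} I would settle by examples: the network of Figure~\ref{fig:ThreeD} is reached from several different orders, exhibiting order-independence, whereas a small instance whose residual has a monomial coefficient expressible as a positive combination of two distinct reversible reaction-vector pairs yields genuinely different reversible realizations according to which pairing is extracted first, exhibiting order-dependence.
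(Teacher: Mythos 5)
This statement is an open conjecture in the paper: the authors explicitly say, just before Example~\ref{ex:ThreeD}, that ``what we only have is a heuristic algorithm without the proof that it leads to a reversible realization if there exists one.'' So there is no paper proof to compare against, and your attempt must stand entirely on its own. It does not: both of your key claims --- (i) non-stuckness and (ii) termination --- are made to rest on the invariant that the residual stays \emph{dominated} by a fixed reversible realization $(\Yb^\ast,\Ab_k^\ast)$, and you yourself concede that an arbitrary admissible action can destroy this invariant, deferring the repair to a ``confluence/exchange lemma'' that you never prove. But order-robustness \emph{is} the conjecture (``independently from the order of actions''); your argument therefore reduces the conjecture to itself. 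What you have actually shown is only that \emph{some} carefully steered sequence of actions (one that always chooses edges of $G^\ast$) terminates in a reversible realization --- which is essentially trivial given the hypothesis that $(\Yb^\ast,\Ab_k^\ast)$ exists --- not that \emph{every} maximal sequence does.

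There is also a concrete defect in your formalization that makes the gap worse. You allow the backward contribution $c_j'\,\xb^{\betab_j}(\alphab-\betab_j)$ to be subtracted with a freely chosen $c_j'>0$, whereas the paper's heuristic requires the backward term to be \emph{found among the terms already present} (``thus we need a term where the exponent of the monomial is $\betab$, and it is multiplied by a positive constant and $\alphab-\betab$''). With your version, termination can fail outright: for $\dot{x}=k_1x-k_2x^2$, decompose the coefficient of $x$ as $k_1=\tfrac{k_1}{2}(3-1)$, record $\ce{X <=> 3X}$, and subtract a backward contribution $c'x^3(1-3)$; the residual now contains a fresh monomial $x^3$, whose treatment can introduce $x^4$, and so on without bound --- even though a reversible realization ($\ce{X <=> 2X}$) exists. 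So your ``incidence count'' measure does not decrease under unrestricted actions, and the complex set need not stay finite. Even after restricting to the paper's intended actions (pairing only existing terms), the real difficulty remains exactly where you located it: proving that no admissible sequence of pairings reaches a stuck state with nonzero residual. Until that exchange/confluence property is established, the conjecture remains open, and your text is a proof plan rather than a proof.
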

\item
Finally, one can apply Theorem \ref{thm:general} to get all the realizations (with the exponents as complexes only) and choose from them the reversible one(s).
Suppose again that the Eqs. \eqref{eq:ThreeD}--\eqref{eq:ThreeD3} are given, and our goal is to find a reaction network inducing it using Theorem \ref{thm:general}.
In this case we have
\begin{equation}
\Zb=\left(\begin{array}{rrrrrr}
             -g & 0 & s & -g & 0 & 0 \\
             s & -g & 0 & 0 & -g & 0 \\
             0 & s & -g & 0 & 0 & -g
          \end{array}\right),\quad
\bb=\left(\begin{array}{c}
             b+s \\
             b+s \\
             b+s
          \end{array}\right),\quad
\Yb=\left(\begin{array}{cccccc}
            1 & 0 & 0 & 1 & 1 & 0 \\
            0 & 1 & 0 & 0 & 1 & 1 \\
            0 & 0 & 1 & 1 & 0 & 1
          \end{array}\right)
\end{equation}
and we have to find a compartmental solution in $\Bb$ of the equation $\Zb=\Yb\Bb$, or
$$
\begin{array}{rclrclrcl}
-g&=&-b_{21}-b_{31}-b_{61}-u_1 &s&=&b_{21}+b_{51}+b_{61} & 0&=&b_{31}+b_{41}+b_{61}\\
0&=&b_{12}+b_{42}+b_{52}  &-g&=&-b_{12}-b_{32}-b_{42}-u_2&
s&=&b_{32}+b_{42}+b_{62}\\
s&=&b_{13}+b_{43}+b_{53}  &0&=&b_{23}+b_{53}+b_{63} &
-g&=&-b_{13}-b_{23}-b_{53}-u_3\\
-g&=&-b_{24}-b_{34}-b_{64}-u_4 &0&=&b_{24}+b_{54}+b_{64} &
0&=&-b_{14}-b_{24}-b_{54}-u_4\\
0&=&-b_{25}-b_{35}-b_{65}-u_5  &-g&=&-b_{15}-b_{35}-b_{45}-u_5&
0&=&b_{35}+b_{45}+b_{65}\\
0&=&b_{16}+b_{46}+b_{56}  &0&=&-b_{16}-b_{36}-b_{46}-u_6 &
-g&=&-b_{16}-b_{26}-b_{56}-u_6.
\end{array}
$$
which for the coordinates has the following consequences.
$$\begin{array}{ccccccccc}
b_{31} = 0&b_{41} = 0&b_{61} = 0&
b_{12} = 0&b_{42} = 0&b_{52} = 0&
b_{23} = 0&b_{53} = 0&b_{63} = 0\\
b_{24} = 0&b_{54} = 0&b_{64} = 0&
b_{35} = 0&b_{45} = 0&b_{65} = 0&
b_{16} = 0&b_{46} = 0&b_{56} = 0
\end{array}
$$
and then the equations reduce to
$$\begin{array}{rcrrcrrcrrcr}
-g&=&-b_{21}-u_1   &
 s&=& b_{21}+b_{51}&
-g&=&-b_{32}-u_2   &
 s&=& b_{32}+b_{62}\\
 s&=& b_{13}+b_{43}&
-g&=&-b_{13}-u_3   &
-g&=&-b_{34}-u_4   &
 0&=&-b_{14}-u_4   \\
 0&=&-b_{25}-u_5   &
-g&=&-b_{15}-u_5   &
 0&=&-b_{36}-u_6   &
-g&=&-b_{26}-u_6
\end{array}
$$
having the solution
$$\begin{array}{rclrclrclrcl}
b_{21}&=&g-u_1  &
b_{51}&=&s-g+u_1&
b_{32}&=&g-u_2  &
b_{62}&=&s-g+u_2\\
b_{13}&=&g-u_3  &
b_{43}&=&s-g+u_3&
b_{34}&=&g-u_4  &
b_{14}&=&-u_4   \\
b_{25}&=&-u_5   &
b_{15}&=&g-u_5  &
b_{36}&=&-u_6   &
b_{26}&=&g-u_6
\end{array}
$$
implying that
$$
u_4=u_5=u_6=0.
$$
Now the kinetic matrix of the dense realization (without inflow) is
\begin{align*}\left(
\begin{array}{lllrrr}
-s-u_1    &0       &g-u_3   & 0& g& 0\\
 g-u_1    &-s-u_2  &0       & 0& 0& g\\
0         & g-u_2  &-s-u_3  & g& 0& 0\\
0         &0       &-g+s+u_3&-g& 0& 0\\
-g+s+u_1  &0       &0       & 0&-g& 0\\
0         &-g+s+u_2&0       & 0& 0&-g
\end{array}\right)
\end{align*}
under the conditions that
$$
s-g\le u_1\le g\quad
s-g\le u_2\le g\quad
s-g\le u_3\le g
$$
hold.
To have no indices let us introduce $\alpha:=u_1, \beta:=u_2, \gamma:=u_3$.
Now consider two cases:
\begin{enumerate}
\item
If $g\geq s$, take $\alpha$, $\beta$ and $\gamma$ such that $0\leq\alpha,\beta,\gamma\leq s$.
\item
If $s\geq g$, take $\alpha$, $\beta$ and $\gamma$ such that $s-g\leq\alpha,\beta,\gamma\leq s$.
\end{enumerate}
As to the inflow: the components of the solution $\vb$ to $\Yb\vb=\bb$ should fulfil
$$
v_1+v_4+v_5=b+s\quad
v_2+v_5+v_6=b+s\quad
v_3+v_4+v_6=b+s,
$$
which means that any $\vb$ defines a solution for which
$$
0\le v_4, 0\le v_5, 0\le v_6, v_4+v_5\le b+s,
v_5+v_6\le b+s, v_4+v_6\le b+s
$$
hold; and then
$v_1:=b+s-v_4-v_5, v_2:=b+s-v_5-v_6, v_3:=b+s-v_4-v_6$.


Summing up, Figure~\ref{fig:ThreeD} shows a sparse realization, while Figure~\ref{eq:dense} shows the dense realization. We also obtain 
the condition of reversibility:
$\alpha=\beta=\gamma=s$ and $v_4=v_5=v_6=0$,
and that it is exactly the sparse realization which is reversible.
\begin{figure}[!ht]
  \centering
  \begin{tikzcd}[ampersand replacement=\&,column sep=small]
    \cf{X+Y} \arrow[rd,rightharpoonup,yshift=2pt,"g"] \& \& \& \& \cf{X+Z} \arrow[ld,rightharpoonup,yshift=-2pt,"g"] \\
    \& \cf{X} \arrow[rd,rightharpoonup,yshift=2pt,"g-s+\alpha",near end] \arrow[lu,rightharpoonup,yshift=-2pt,"\alpha"] \arrow[ddr,bend right=20,"s-\alpha"'] \& \& \cf{Z} \arrow[ld,rightharpoonup,yshift=-2pt,"g-s+\gamma"] \arrow[ru,rightharpoonup,yshift=2pt,"\gamma"] \arrow[ll,bend right=20,"s-\gamma"'] \& \\
    \& \& \cf{0} \arrow[lu,rightharpoonup,yshift=-2pt,"b+s-v_4-v_5"] \arrow[ru,rightharpoonup,yshift=2pt,"b+s-v_5-v_6",near end] \arrow[d,rightharpoonup,xshift=1.5pt,"b+s-v_4-v_6"] \arrow[uull,bend right=30,"v_4"'] \arrow[dd,bend right=30,"v_6"'] \arrow[uurr,bend right=30,"v_5"'] \& \& \\
    \& \& \cf{Y} \arrow[d,rightharpoonup,xshift=1.5pt,"\beta"] \arrow[u,rightharpoonup,xshift=-1.5pt,"g-s+\beta"] \arrow[ruu,bend right=20,"s-\beta"'] \& \& \\
    \& \& \cf{Y+Z} \arrow[u,rightharpoonup,xshift=-1.5pt,"g"] \& \&
  \end{tikzcd}
\caption{A weakly reversible single linkage class realization of deficiency 3 to induce Eq.~\eqref{eq:ThreeD}-\eqref{eq:ThreeD3}. This realization is dense as opposed to the sparse realization shown in Figure~\ref{fig:ThreeD}.  }\label{eq:dense}
\end{figure}
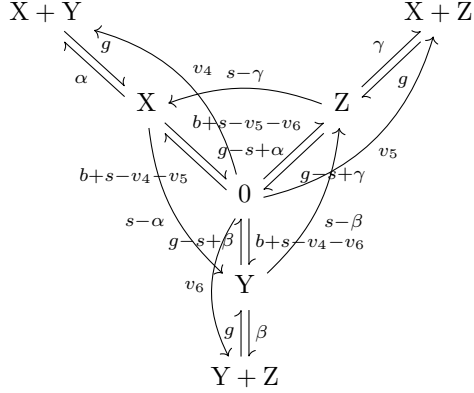
\end{enumerate}
\end{example}
In this example we were lucky to receive a canonical realization with so many nice properties. The reader can easily verify that this is the case with the second three dimensional repressilator model and with the six dimensional one of~\cite{dukaricerramijeralalebarromanovskitothweber}: the canonic realization is again reversible thus assures the existence of a positive stationary point.

Let us emphasize that via a known result in reaction kinetics we were able to assert the existence of a positive solution to a second degree polynomial (in three variables), or to put it another way, we could
\emph{apply formal reaction kinetics in mathematics}.

As to uniqueness of the stationary point: it does not follow from the mentioned theorems. One may try several methods, see e.g.~\cite{joshishiu}.
However, uniqueness of the reversible realization itself does follow (in this special case) from the method based on Theorem \ref{thm:general}.
\section{On computing realizations of kinetic equations}

In the previous section, we looked at two classes of systems with certain network properties, namely that of (generalized) compartmental models (i.e., complexes consisting of a single species) and that of reversible networks. In this section, we will provide an algorithmic approach to the realizability problem when the coefficients in the ODE are given. To date, there are algorithms based on linear programming (LP) and mixed integer linear programming (MILP) techniques. These are reviewed in Section~\ref{sec:algorithms}.

In the previous section, we only use the complexes that show up as exponents of monomials in the kinetic differential equation. This may not necessarily be the case for more general examples~\cite{szederkenyiComment}. However, once \emph{a set of complexes has been decided on} (and must include the exponents of monomials), the algorithms presented below can be used to compute realizations with desirable structural properties.

In Section~\ref{sec:modnetworkDE}, we consider ways to modify an existing realization while preserving dynamical equivalence, including adding to/subtracting from the set of complexes. Finally, we consider cases when a realization with certain properties (e.g., complex balanced) is independent of the choice of the set of complexes.

\subsection{Review of algorithms}
\label{sec:algorithms}

Qualitative properties can be rewritten in terms of a \emph{mixed integer linear programming} (MILP) problem~\cite{szederkenyihangospeni}.  
The works~\cite{szederkenyihangostuza,johnstonsiegelszederkenyiwr} determine weakly reversible realizations, while~\cite{szederkenyihangos} provides reversible and detailed balanced realizations.
The paper~\cite{schusterschuster} finds detailed balanced subnetworks in reactions.

\bigskip 

Recall the discussion about possibly enlarging the set of complexes (Problem~\ref{pr:enlarge}) from $\Yb$ to $\overline{\Yb}$. We write the dynamical equivalence conditions for finding a realization using linear programming method with this predetermined complex set $\overline{\Yb}$. Note that the columns of $\Yb$ must also be columns of $\overline{\Yb}$. We want to find a realization of the kinetic differential equation $\dot{\xb} = \Zb \cdot \xb^{\Yb}$ with the complex set $\overline{\Yb}$. In other words, we want to find a  Kirchoff matrix $\Ab_k$ such that 
    \begin{align}
    \label{eq:realEQ1}
        \overline{\Yb} \cdot  \Ab_k \cdot  \xb^{\overline{\Yb}}= \Zb \cdot  \xb^{\Yb}.
    \end{align}
We may add columns of $\vv 0$ to the matrix $\Zb$ and obtain $\overline{\Zb}$, where the number of columns added is the number of additional columns in $\overline{\Yb}$ compared to $\Yb$. Then Eq.~\eqref{eq:realEQ1}  becomes
    \begin{align}
    \label{eq:realEQ2}
        \overline{\Yb} \cdot \Ab_k \cdot  \xb^{\overline{\Yb}}= \overline{\Zb} \cdot  \xb^{\overline{\Yb}}.
    \end{align}

Therefore, kinetic realizability using $\overline{\Yb}$ is equivalent to the feasibility of the following convex constraint set
\begin{align}
\label{eq:RealizationGeneral-DE}
\overline{\Yb}\cdot \Ab_k & = \overline{\Zb}&\text{(dynamical equivalence)} \\
\label{eq:RealizationGeneral-kcnst}
[\Ab_k]_{ij}         & \ge 0~\text{for}~i\ne j&\text{(nonnegativity of rate coefficients)} \\
\label{eq:RealizationGeneral-colconserv}
\oneb\T \cdot \Ab_k  &= \nulb&\text{(column conservation in $\Ab_k$)}
\end{align}

\bigskip 

Next, we review algorithms for finding realizations with certain structural properties, namely, reversible, weakly reversible, or complex balancing realizations. In Eq.~\eqref{eq:RealizationGeneral-DE}-\eqref{eq:RealizationGeneral-colconserv} above, the matrices $\Yb$ and $\Zb$ do not show up explicitly. Hence, to simplify notation, in what follows below, we let $\Yb$ denote the complexes to be considered, some of which may not appear explicitly in the differential equation, and $\Zb$ defines a kinetic differential equation (with possibly columns of $\vv 0$ so that $\Zb$ is the same size as $\Yb$).

\bigskip 

To find a \textbf{reversible realization}, consider the following feasibility problem~\cite{Johnston2012a}:
\begin{align}
    \Yb \cdot \Ab_k &= \Zb \\
    [\Ab_k]_{ij} &\geq 0 \text{ for } i \neq j \\
    \oneb^T \cdot \Ab_k &= \nulb \\
    [\Ab_k]_{ij} \geq \epsilon 
 &\iff   [\Ab_k]_{ji} \geq \epsilon \text{ for } i \neq j. \label{eq:RealizationRev}
\end{align}
The statement (\ref{eq:RealizationRev}) is the condition that the  reaction step $i \rightarrow j$ is present if and only if reaction $j \rightarrow i$ is present. This feasibility problem can be solved in the framework of mixed integer linear programming (MILP). 

\bigskip 

To find a  \textbf{weakly reversible realization}, consider the following feasibility problem~\cite{Johnston2012}: 
\begin{align}
    \Yb \cdot \Ab_k &= \Zb \\
    [\Ab_k]_{ij} &\geq 0 \text{ for } i \neq j \\
    \oneb^T \cdot \Ab_k &= \nulb \\
\epsilon [\Ab_k]_{ij}&\leq [\Ab'_k]_{ij}, 
\, \epsilon [\Ab'_k]_{ij}\leq [\Ab_k]_{ij} 
\text{ for } i \neq j
    \label{eq:wrMILP}
\\
\Ab'_k \cdot \oneb &= \nulb.  \label{eq:wrRescaled}
\end{align}
The inequalities \eqref{eq:wrMILP} ensure that the structure of $\Ab_k$ and $\Ab_k'$ are the same, i.e., a reaction is present in one if and only if it is present in the other. Since every weakly reversible network is complex balanced for some choice of parameter, the algorithm searches for a complex balanced $\Ab_k'$ (in Eq.~\eqref{eq:wrRescaled}), which shares the network structure we want but there is no relations in terms of the rate coefficients we seek. 

In this MILP problem, the decision variables are the off-diagonal entries of $\Ab_k$, the off-diagonals entries of another Kirchoff matrix $\Ab'_k$, while
${\Zb, \Yb}$ are given matrices, and $\epsilon \ll 1$ is a fixed small constant. We will explore the relationship between finding weakly reversible and complex balanced realizations in Section~\ref{sec:WRandCB}. More efficient methods to compute weakly reversible realizations are available; for example, a polynomial time algorithm can be found in~\cite{rudanszederkenyihangospeni2014}.


\bigskip 

To find a \textbf{detailed balanced realization}, supposed we have the numerical value of one stationary concentration $\xb^*$ and consider~\cite{szederkenyihangos}:
\begin{align}
    \Yb \cdot \Ab_k &= \Zb \\
    [\Ab_k]_{ij} &\geq 0 \text{ for } i \neq j \\
    \oneb^T \cdot \Ab_k &= \nulb \\
    [\Ab_k]_{ji} (\xb^*)^{\yb_i} &=  [\Ab_k]_{ij} (\xb^*)^{\yb_j} \text{ for each } i \neq j.
\end{align}


\bigskip 

Finally, to find a \textbf{complex balanced realization}, supposed we have the numerical value of one stationary concentration $\xb^*$ and consider~\cite{szederkenyihangos}:
\begin{align}
    \Yb \cdot \Ab_k &= \Zb \\
    [\Ab_k]_{ij} &\geq 0 \text{ for } i \neq j \\
    \oneb^T \cdot \Ab_k &= \nulb \\
    - [\Ab_k]_{ii} (\xb^*)^{\yb_i} &= \sum_{j\neq i} [\Ab_k]_{{ij}} (\xb^*)^{\yb_j} \text{ for each } i.
\end{align}





\subsection{Modifying network while preserving dynamical equivalence}
\label{sec:modnetworkDE}

In this section, we introduce several modifications to an existing realization while preserving dynamical equivalence. These include:
\begin{enumerate}
    \item positive or convex combinations of realizations,
    \item 
    adding new reactions, and 
    \item 
    eliminating complexes.
\end{enumerate}
This list is non-exhaustive, but we hope that some of the ideas presented in this section will motivate other types of modifications that preserve dynamical equivalences. 

\bigskip 

\subsubsection{Positive and convex combinations of realizations}

Suppose we have found several realizations $(\Yb, \Ab_k^1), \, (\Yb, \Ab_k^2), \dots, (\Yb, \Ab_k^I)$ to the kinetic differential equation $\dot{\xb} = \Zb \cdot \xb^{\Yb}$. Then we can easily produce other realizations which may be simpler, or fulfil some further properties~\cite{Csercsik2012a}.

\begin{theorem}
Suppose that $\Ab^1_{k}$,\dots,$\Ab^I_{k}$ are realizations of a kinetic differential equation, with complex set $\Yb$,
and let $\Ab_{k}:=\sum_{i=1}^I\alpha_i\Ab^i_{k}$ for some $\alpha_i>0$, $i=1,\dots,I$.
\begin{enumerate}
\item
$\Ab_{k}$ is a Kirchhoff matrix, and realizes the equation $\dot{\xb} = (\sum_i \alpha_i \Yb \cdot  \Ab_k^i) \xb^{\Yb}$.
\item 
    If $\sum_{i=1}^I \alpha_i = 1$, then $(\Yb, \Ab_k)$ realizes the same kinetic differential equation $\dot{\xb} = \Zb \cdot \xb^{\Yb}$.
\item
    If for all $i=1,\dots,I$, the matrix $\Ab^i_{k}$ defines a reversible realization of some kinetic equation, then $\Ab_{k}$ defines a reversible realization of some kinetic equation.
\item
    If for all $i=1,\dots,I$, the matrix $\Ab^i_{k}$ defines a weakly reversible realization of some kinetic equation, then $\Ab_{k}$ defines a weakly reversible realization of some kinetic equation.
\end{enumerate}
\end{theorem}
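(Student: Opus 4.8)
The plan is to dispatch the four claims in order, observing that parts 1 and 2 are purely linear-algebraic while parts 3 and 4 are statements about the \emph{support} of $\Ab_k$ (its reaction graph) and carry the real content. For part 1 I would check directly that $\Ab_k = \sum_{i=1}^I \alpha_i \Ab^i_k$ is Kirchhoff: its off-diagonal entries $[\Ab_k]_{jl} = \sum_i \alpha_i [\Ab^i_k]_{jl}$ (for $j \ne l$) are nonnegative, being positive combinations of nonnegative numbers, and column conservation is inherited since $\oneb\T \Ab_k = \sum_i \alpha_i \oneb\T \Ab^i_k = \nulb$. The induced equation then follows by linearity, $\Yb \Ab_k \xb^{\Yb} = \sum_i \alpha_i (\Yb \Ab^i_k) \xb^{\Yb}$. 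Part 2 is then immediate: if each $\Ab^i_k$ realizes $\dot{\xb} = \Zb \xb^{\Yb}$, then $\Yb \Ab^i_k = \Zb$ for all $i$, so $\Yb \Ab_k = \big(\sum_i \alpha_i\big)\Zb = \Zb$ exactly when $\sum_i \alpha_i = 1$.

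The key observation underlying parts 3 and 4 is that because the $\alpha_i$ are \emph{strictly} positive and every off-diagonal entry of each $\Ab^i_k$ is nonnegative, \emph{no cancellation} can occur: for $j \ne l$, one has $[\Ab_k]_{jl} > 0$ if and only if $[\Ab^i_k]_{jl} > 0$ for at least one $i$. Recalling that the reaction $l \to j$ is present precisely when $[\Ab_k]_{jl} > 0$, this says that the reaction graph of $\Ab_k$ is exactly the \emph{union} of the reaction graphs of the $\Ab^i_k$. Part 3 drops out immediately, since a union of symmetric edge relations is symmetric: if $l \to j$ is an edge of $\Ab_k$, it is contributed by some $\Ab^i_k$, whose reversibility supplies the reverse edge $j \to l$ in $\Ab^i_k$, hence in $\Ab_k$; combined with part 1 (which shows $\Ab_k$ is Kirchhoff, so it realizes \emph{some} kinetic equation), this gives a reversible realization.

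For part 4 I would use the characterization from Definition~\ref{def:classesofreactions} that a network is weakly reversible exactly when every reaction arrow is an edge of a directed cycle (equivalently, the head and tail of each edge lie in a common strong component). Given any edge $l \to j$ of $\Ab_k$, it is contributed by some $\Ab^i_k$; since that realization is weakly reversible, there is a directed path from $j$ back to $l$ within the reaction graph of $\Ab^i_k$, and this path survives in the larger union graph of $\Ab_k$. Thus every edge of $\Ab_k$ lies on a directed cycle, so $\Ab_k$ is weakly reversible. The main obstacle here is conceptual rather than computational: one must be careful that it is the strict positivity of the $\alpha_i$ — not mere nonnegativity — that rules out cancellation and guarantees the support of $\Ab_k$ is the \emph{full} union of supports. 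Were some coefficient permitted to vanish or be negative, edges could be lost and both reversibility and weak reversibility could fail; granting the union-of-supports fact, the remaining graph-theoretic arguments are routine.
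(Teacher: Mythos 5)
Your proposal is correct and follows essentially the same route as the paper's proof: direct verification of the Kirchhoff property and linearity for parts 1--2, and for parts 3--4 the observation that strict positivity of the $\alpha_i$ prevents cancellation, so every edge (and hence every reversed edge or cycle) present in some $\Ab^i_k$ survives in $\Ab_k$. Your explicit statement of the union-of-supports fact is a slightly cleaner packaging of what the paper uses implicitly, but the argument is the same.
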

\begin{proof}
\begin{enumerate}
\item[\nonumber]
\item Obviously, the off-diagonal terms of $\Ab_k$ are nonnegative. Also the column sums of $\Ab_k$ are conserved. Thus $\Ab_k$ is Kirchoff. We see that
    \begin{align*}
        \Yb \cdot \Ab_k \cdot \xb^{\Yb} &=  \sum_{i=1}^I \alpha_i \Yb \cdot  \Ab_k^i \cdot \xb^{\Yb}.
    \end{align*}
\item 
    This trivially follows from the calculation above.
\item For any $p \neq q$, if $[\Ab_k]_{pq} > 0$, then $[\Ab_k^i]_{pq} > 0$ for some $ 1 \leq i \leq I$. Since $\Ab_k^i$ defines a reversible realization, it must be the case that $[\Ab_k^i]_{qp} > 0$, hence $[\Ab_k]_{qp} > 0$, i.e., $\Ab_k$ defines a reversible realization.
\item
The proof is similar to the above. For any $p \neq q$, if $[\Ab_k]_{pq} > 0$, there must be some $[\Ab_k^i]_{pq} > 0$. Since $\Ab_k^i$ defines a weakly reversible realization, the reaction $q \to p$ must be part of a cycle, i.e., the off-diagonal terms in $\Ab_k^i$ corresponding to the edges in this cycle are positive, hence those positions in $\Ab_k$ are also positive. In other words, this particular cycle appears in the graph defined by $\Ab_k$ as well.
\end{enumerate}
\end{proof}

\bigskip 

\subsubsection{Adding new reactions to a realization}

Now we look at  modifying an existing realization by adding new reactions.  Suppose that $\Ab_{k}$ defines a realization with complexes $\yb_1, \, \yb_2, \, \ldots, \, \yb_N$, and suppose that $\yb$ is a convex combination of the remaining complexes, i.e.,  $\yb=\sum_{n=1}^{N} v_n\yb_n$ where $ v_n\ge0$ for $n=1,2,\dots,N$ and $\sum_{n=1}^{N} v_n=1$.

Then we can add the following reactions: for every $n$ such that $v_n \neq 0$, add a reaction from $\yb$ to the complex $\yb_n$, with rate coefficient $v_n$. These reactions together contributes $\vv 0$ to the differential equation.

We wonder if we can add reactions in the opposite direction, i.e., reactions of the form $\yb_n \to \yb$. There are cases when it is possible and cases when it is not. While we do not have an answer to this question, we illustrate the complexity using two examples.

\begin{example}
First we show an example where adding a complex situated in the convex cone of the original complexes seems to be superfluous: its effect on the induced kinetic differential equation can be expressed only using reactions among the original complexes. Consider the system  
    \begin{center}
    \begin{tikzpicture}
        \node (0) at (0,0) {0};
        \node (2x) at (4,0)  {\cf{2X}};
        \node (2x2y) at (4,2)  {\cf{2X + 2Y}};
        \node (2y) at (0,2) {\cf{2Y}};
        \draw [-{>[harpoon]}, transform canvas={yshift=1}] (0)--(2x);
        \draw [-{>[harpoon]}, transform canvas={yshift=-1}] (2x)--(0);
        \draw [-{>[harpoon]}, transform canvas={yshift=1}] (2y)--(2x2y);
        \draw [-{>[harpoon]}, transform canvas={yshift=-1}] (2x2y)--(2y);
        \draw [-{>[harpoon]}, transform canvas={xshift=-1}] (0)--(2y);
        \draw [-{>[harpoon]}, transform canvas={xshift=1}] (2y)--(0);
        \draw [-{>[harpoon]}, transform canvas={xshift=-1}] (2x)--(2x2y);
        \draw [-{>[harpoon]}, transform canvas={xshift=1}] (2x2y)--(2x);
    \end{tikzpicture}
    \end{center}
To add the reaction step $\cf{2X} \to  \cf{X+Y}$  
to get the network 
 \begin{center}
    \begin{tikzpicture}
        \node (0) at (0,0) {0};
        \node (2x) at (4,0)  {\cf{2X}};
        \node (2x2y) at (4,2)  {\cf{2X + 2Y}};
        \node (2y) at (0,2) {\cf{2Y}};
        \node (xy) at (2,1) {\cf{X+Y}};
        \draw [-{>[harpoon]}, transform canvas={yshift=1}] (0)--(2x);
        \draw [-{>[harpoon]}, transform canvas={yshift=-1}] (2x)--(0);
        \draw [-{>[harpoon]}, transform canvas={yshift=1}] (2y)--(2x2y);
        \draw [-{>[harpoon]}, transform canvas={yshift=-1}] (2x2y)--(2y);
        \draw [-{>[harpoon]}, transform canvas={xshift=-1}] (0)--(2y);
        \draw [-{>[harpoon]}, transform canvas={xshift=1}] (2y)--(0);
        \draw [-{>[harpoon]}, transform canvas={xshift=-1}] (2x)--(2x2y);
        \draw [-{>[harpoon]}, transform canvas={xshift=1}] (2x2y)--(2x);
        \draw [->] (2x)--(xy);
    \end{tikzpicture}
    \end{center}

\noindent 
one must borrow from the reactions $\cf{2X} \to \cf{0}$ and $\cf{2X} \to \cf{2X+2Y}$ in the sense that
 \begin{align*}
 \underbrace{
     k_1 x^2 \begin{pmatrix}
        -2 \\ 0
     \end{pmatrix}
    \,\,+\,\,
     k_2 x^2 \begin{pmatrix}
        0 \\ 2
     \end{pmatrix}
}_{\text{in first network}}
    =
 \underbrace{
     (k_1-\epsilon) x^2 \begin{pmatrix}
        -2 \\ 0
     \end{pmatrix}
    \,\,+\,\,
    ( k_2-\epsilon) x^2 \begin{pmatrix}
        0 \\ 2
     \end{pmatrix}
    \,\,+\,\,
     2\epsilon x^2 \begin{pmatrix}
        -1 \\ 1
     \end{pmatrix}
}_{\text{in second network}}
 \end{align*}
where $0 < \epsilon < \min\{k_1, k_2\}$.
\end{example}

The example above may lead us to believe that it is always possible to add a reaction from some $\yb_n$ on the boundary of the convex hall to some complex $\yb$ in the interior of the convex hull of other complexes. The example below illustrates that this is not always the case, and further work is needed to explore when it is possible to add such a reaction.

\begin{example}
If one takes the system
 \begin{center}
    \begin{tikzpicture}
        \node (0) at (0,0) [left] {0};
        \node (2x) at (2,0)  [right] {\cf{2X}};
        \node (2x2y) at (2,1) [right] {\cf{2X + 2Y}};
        \node (2y) at (0,1) [left] {\cf{2Y}};
        \draw [-{>[harpoon]}, transform canvas={yshift=1}] (0)--(2x);
        \draw [-{>[harpoon]}, transform canvas={yshift=-1}] (2x)--(0);
        \draw [-{>[harpoon]}, transform canvas={yshift=1}] (2y)--(2x2y);
        \draw [-{>[harpoon]}, transform canvas={yshift=-1}] (2x2y)--(2y);
    \end{tikzpicture}
    \end{center}
and tries to add the reaction step $\cf{2X} \to \cf{X+Y}$ (note that the new complex $\cf{X+Y}$ is a convex combination of the old ones), there is no way of choosing the rate coefficients so as to get the induced kinetic differential equation of the extended one. 
\end{example}

\bigskip 

\subsubsection{Eliminatin complexes from a realization}

Perhaps instead of adding or removing reactions from an existing realization, one may want to add or remove complexes from an existing realization. As noted in~\cite{szederkenyiComment}, it is always possible to add a complex that has not appeared in the current realization, as long as the weighted sum of reaction vectors is $\vv 0$. For example, one can always add the reactions $\cf{0} \xleftarrow{k} \cf{X} \xrightarrow{k} \cf{2X}$ without changing the induced kinetic differential equation. The question lies, then, on when it is possible to eliminate a complex from your realization. 

\begin{theorem}
Suppose that $(\Yb, \Ab_{k})$ is a realization with $N+1$ complexes $\yb_1,\dots,\yb_N, \yb_{N+1}$,
where the complex $\yb_{N+1}$ participates in at least one reaction as a reactant complex.
Suppose that the monomial $\xb^{\yb_{N+1}}$ does not appear explicitly in the kinetic differential equation.
Then the differential equation can also be realized using only the complexes
$\yb_1,\dots,\yb_N$.
\end{theorem}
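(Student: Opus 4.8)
The plan is to read off, directly from the Laplacian form of the dynamics, what the vanishing of the monomial $\xb^{\yb_{N+1}}$ buys us, and then to reroute every reaction feeding into $\yb_{N+1}$ through the complexes it reacts to. First I would isolate the coefficient of each monomial. Writing $k_{i,N+1}$ for the rate of the reaction $\yb_{N+1}\to\yb_i$ and $k_{N+1,j}$ for the rate of $\yb_j\to\yb_{N+1}$, the coefficient of $\xb^{\yb_{N+1}}$ in $\dot{\xb}=\Yb\Ab_k\xb^{\Yb}$ is the $(N+1)$-th column of $\Yb\Ab_k$, namely $\sum_{i\neq N+1}k_{i,N+1}(\yb_i-\yb_{N+1})$. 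The hypothesis that $\xb^{\yb_{N+1}}$ does not appear explicitly forces this vector to be $\vv 0$. Because $\yb_{N+1}$ is a reactant in at least one reaction, the total outflow rate $K:=\sum_{i\neq N+1}k_{i,N+1}$ is strictly positive, so dividing by $K$ yields
$$
\yb_{N+1}=\sum_{i\neq N+1}\frac{k_{i,N+1}}{K}\,\yb_i ,
$$
exhibiting $\yb_{N+1}$ as a convex combination of the remaining complexes, with weights supported on those $\yb_i$ to which $\yb_{N+1}$ actually reacts. This is the crux of the argument and connects to the convex-combination discussion of the previous subsection.

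Next I would build the reduced realization on $\yb_1,\dots,\yb_N$ by deleting $\yb_{N+1}$ together with every reaction incident to it, and then, for each incoming reaction $\yb_j\to\yb_{N+1}$ of rate $k_{N+1,j}$, adding the bundle of reactions $\yb_j\to\yb_i$ with rate $k_{N+1,j}\,k_{i,N+1}/K$ for each $i$ with $k_{i,N+1}>0$ (summing rates when a reaction already exists, and discarding the degenerate self-loop $i=j$, which contributes nothing). The point is that each bundle reproduces the old contribution to the coefficient of $\xb^{\yb_j}$:
$$
\sum_{i}k_{N+1,j}\frac{k_{i,N+1}}{K}(\yb_i-\yb_j)=k_{N+1,j}\Big(\sum_i\tfrac{k_{i,N+1}}{K}\yb_i-\yb_j\Big)=k_{N+1,j}(\yb_{N+1}-\yb_j),
$$
using the convex-combination identity and $\sum_i k_{i,N+1}/K=1$. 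Since every reaction in the bundle has reactant $\yb_j$, it touches only the coefficient of $\xb^{\yb_j}$, so no other monomial is disturbed; and the deleted outgoing reactions $\yb_{N+1}\to\yb_i$ only ever fed the coefficient of $\xb^{\yb_{N+1}}$, which was already $\vv 0$. Hence every retained monomial coefficient is unchanged, and the induced kinetic differential equation is preserved.

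Finally I would verify that the construction genuinely produces a realization: all new rate coefficients are products of nonnegative numbers divided by $K>0$, hence nonnegative, so the associated Laplacian is again Kirchhoff, and all reactions now live among $\yb_1,\dots,\yb_N$. The only real subtlety --- and the step I expect to carry the whole proof --- is the observation in the first paragraph that the two hypotheses together force $\yb_{N+1}$ to be a convex combination of its reaction targets; once that is in hand, the rerouting and the accompanying bookkeeping are routine.
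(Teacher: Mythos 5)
Your proof is correct and takes essentially the same route as the paper: the key step --- deducing from the vanishing coefficient of $\xb^{\yb_{N+1}}$ and the existence of an outgoing reaction that $\yb_{N+1}$ is a convex combination of its reaction targets --- is identical, and your rerouted bundles $\yb_j \to \yb_i$ with rates $k_{N+1,j}k_{i,N+1}/K$ are exactly the entries of the rank-one correction $\Ab''_{k}=\Ab'_{k}+\frac{1}{\sum_{n} v_n}\vb\ub\T$ that the paper writes in matrix form. The only difference is notational (entrywise reaction bookkeeping versus block-matrix algebra), with your version making explicit why self-loops and the deleted outgoing reactions are harmless.
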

\begin{proof}
We can write the complex matrix and Kirchoff matrix  as
\begin{equation*}\label{eq:red_matrices}
\Yb = \left[
  \begin{array}{cc}
    \Yb' & \yb_{N+1}
\end{array} \right], 
\quad \text{and}\quad 
  \Ab_{k} = \left[
   \begin{array}{cc} 
     \Ab'_{k} & \vb \\
   \end{array} 
  \right],
\end{equation*}
where $\ub, \vb \in \mathbb{R}^{N}_{\geq 0}$.
Then the kinetic differential equation can be written as
\begin{equation*}
  \frac{d\xb}{dt} = \Yb'\Ab'_{k} \xb^{\Yb'}+\Yb'\vb \xb^{\yb_{N+1}}+\yb_{N+1}\ub\T \xb^{\Yb'}-\left(\sum_{n=1}^N\vb_n\right)\yb_{N+1}\xb^{\yb_{N+1}}.
\end{equation*}
Since the complex $\yb_{N+1}$ admits some outgoing reactions, we have that $\sum_{n=1}^N\vb_n\neq 0$, and,
using the fact that the coefficient of $\xb^{\yb_{N+1}}$ is zero, we find $\yb_{N+1}=\frac{1}{\sum_{n=1}^N\vb_n} \Yb'\vb$, and
\begin{equation*}
  \frac{d\xb}{dt} = \Yb'\left(\Ab'_{k}+\frac{1}{\sum_{n=1}^N\vb_n}\vb\ub\T \right) \xb^{\Yb'}.
\end{equation*}
We conclude that $\yb_{N+1}$ is a convex combination of the remaining complexes. More importantly, the Kirchoff matrix $\Ab''_{k}=\Ab'_{k}+\frac{1}{\sum_{n=1}^N\vb_n}\vb\ub\T $ gives a realization of the same kinetic equation with only the complexes $\yb_1,\dots,\yb_N$.
\end{proof}
\begin{remark}
If $\xb^{\yb}$ appears explicitly as a term in the kinetic differential equation, then that complex cannot be removed. 
\end{remark}

\bigskip 

After seeing the procedure of eliminating extraneous complexes, one may be tempted to remove all complexes that do not appear as exponents of monomials without ruining desirable properties (e.g., weak reversibility, complex balanced). 

Indeed, if one is interested in finding a  reversible, or weakly reversible, or detailed balanced, or complex balanced realization, any complex that does not appear in the differential equation (after simplification) can be eliminated from the network~\cite{craciunjinyu}: 
\begin{theorem}\label{thm:CBnonewcomplex}
    A kinetic differential equation $\dot{\xb} = \mathbf{f}(\xb)$ has a complex balanced realization if and only if it has a complex balanced realization where the complexes (the columns of $\Yb$) are precisely the monomials in $\mathbf{f}(\xb)$. Analogous results hold for detailed balanced, weakly reversible, and reversible realizations. 
\end{theorem}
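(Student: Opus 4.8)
The plan is to dispatch the ``if'' direction as immediate (a realization whose complexes are exactly the monomials of $\mathbf{f}(\xb)$ is in particular a realization of the required type) and to prove the ``only if'' direction for all four properties at once, by verifying that the complex-elimination procedure of the preceding theorem preserves each of them. The key enabling observation is that each of \CB, detailed balanced, and reversible implies \WR, and that in a \WR\ realization every complex lies on a directed cycle, hence has an outgoing edge and participates as a reactant. Therefore, whenever some complex $\yb_{N+1}$ does not occur as a monomial of $\mathbf{f}(\xb)$, the hypotheses of the preceding elimination theorem are satisfied and we may delete it, producing the Kirchoff matrix $\Ab''_k = \Ab'_k + \frac{1}{\sum_{n=1}^N \vb_n}\vb\ub\T$ on the surviving complexes. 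Because the procedure preserves dynamical equivalence, $\mathbf{f}$ and hence its set of monomials is unchanged throughout; since every monomial of $\mathbf{f}$ must be a column of $\Yb$ in any realization, after finitely many deletions the complexes are \emph{exactly} the monomials of $\mathbf{f}$.

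The crux is to show each deletion preserves the relevant property, which I would do using the block decomposition $\Ab_k\pb = \nulb$ with $\pb = (\pb', p_{N+1})\T$. For \WR\ I would invoke the characterization that a Kirchoff matrix is \WR\ if and only if its kernel contains a strictly positive vector (which follows from reading $\Ab_k\pb=\nulb$ as flux conservation at each vertex and applying flow decomposition). The top block gives $\Ab'_k\pb' + \vb\,p_{N+1} = \nulb$ and the bottom row gives $\ub\T\pb' = (\sum_{n=1}^N\vb_n)\,p_{N+1}$; substituting the latter into $\Ab''_k\pb'$ yields $\Ab''_k\pb' = \Ab'_k\pb' + \vb\,p_{N+1} = \nulb$ with $\pb' > 0$, so $\Ab''_k$ is again \WR. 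The \CB\ case is identical upon taking $\pb = (\xb^*)^{\Yb}$: the \emph{same} $\xb^*$ remains a \CB\ point, since $\Ab''_k(\xb^*)^{\Yb'} = \nulb$.

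For reversibility I would work at the level of edge supports: reversibility forces $\Supp(\vb) = \Supp(\ub)$, so the rank-one update $\vb\ub\T$ has a positive $(m,n)$ entry exactly when it has a positive $(n,m)$ entry, and this matches the already symmetric off-diagonal support of $\Ab'_k$; hence $\Ab''_k$ is reversible. The detailed balanced case is where I expect the actual work. Writing $w_i = (\xb^*)^{\yb_i}$, the pairwise balance relations at the deleted vertex read $\ub_m w_m = \vb_m w_{N+1}$ for each $m$, and combining these gives the identity $\vb_m\ub_n w_n = \vb_n\ub_m w_m$, which is precisely the statement that the rerouted fluxes $\yb_n \to \yb_{N+1} \to \yb_m$ recombine into a balanced pair $\yb_n \rightleftharpoons \yb_m$; together with the surviving relations $[\Ab'_k]_{mn}w_n = [\Ab'_k]_{nm}w_m$ this gives $[\Ab''_k]_{mn}w_n = [\Ab''_k]_{nm}w_m$, i.e. detailed balance at the same $\xb^*$. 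The main obstacle is therefore bookkeeping rather than conceptual: one must check that no rerouted reaction breaks a pairwise balance and that the rank-one correction never violates the Kirchoff structure, the latter already being guaranteed by the elimination theorem. An induction on the number of extraneous complexes then finishes the proof.
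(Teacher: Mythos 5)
Your proposal is correct, but note first that the paper does not actually prove Theorem~\ref{thm:CBnonewcomplex}: it is quoted from the reference~\cite{craciunjinyu}, so there is no internal proof to compare against. What you have supplied is a self-contained derivation that bootstraps the paper's preceding complex-elimination theorem, and all of its key steps check out. Writing $s=\sum_{n=1}^N \vb_n$, your block computation
\begin{equation*}
\Ab''_{k}\pb' \;=\; \Ab'_{k}\pb' + \tfrac{1}{s}\,\vb\,(\ub\T\pb') \;=\; \Ab'_{k}\pb' + \vb\, p_{N+1} \;=\; \nulb
\end{equation*}
is exactly right, and specializing $\pb=(\xb^*)^{\Yb}$ shows the \emph{same} $\xb^*$ remains a \CB\ point after deletion; the kernel characterization of weak reversibility you invoke is standard and is in fact used by the paper itself in Section~\ref{sec:WRandCB} (``there is a positive vector $\pb$ in the kernel of the Kirchoff matrix''). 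The support argument $\Supp(\vb)=\Supp(\ub)$ for reversibility and the flux identity $\vb_m\ub_n w_n=\vb_n\ub_m w_m$ (from $\ub_n w_n=\vb_n w_{N+1}$) for detailed balance are likewise correct, since both summands of $\Ab''_k=\Ab'_k+\tfrac{1}{s}\vb\ub\T$ are nonnegative and each separately satisfies the required symmetry or balance relation. Two points deserve to be made explicit in a final write-up: (i) a complex that participates in \emph{no} reaction at all (an isolated vertex, which can occur when the complex set has been enlarged beyond the monomials) does not lie on a directed cycle, so your ``every complex has an outgoing edge'' claim needs the trivial caveat that such complexes are deleted outright, without appeal to the elimination theorem; (ii) the terminal step of your induction uses that every monomial of $\mathbf{f}$ must appear as a column of $\Yb$ in \emph{any} realization, which follows from linear independence of distinct monomials as functions on $\rr^M_{>0}$ --- you assert this, but it merits the one-line justification. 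With those additions your argument is a complete and correct proof, arguably more informative than a citation, since it also shows the complex balanced (or detailed balanced) point survives the reduction unchanged.
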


As a consequence, the algorithms described in Section~\ref{sec:algorithms} can be used to determine whether a given system of kinetic differential equations admits a complex balanced (or detailed balanced, weakly reversible, or reversible) realization. This is a finite calculation as there is no need to compute using different sets of complexes.

\section{Relation between \WR\ and \CB\ realizations for symbolic kinetic equations}
\label{sec:WRandCB}

When we discussed the algorithm for computing weakly reversible realizations, the reader may have noticed that we introduced an additional Kirchoff matrix $\Ab_k'$ that does not appear in the realization $(\Yb, \Ab_k)$.  This reflects the fact that there is a subtle algebraic relation between weak reversibility and complex balancing. In this section, we explore this relationship.

In order to study the possible network structures that can arise from kinetic differential equation to those with unspecified coefficient, we make the following definition:
\newcommand{\kincomptb}{kinetically compatible } 
\begin{definition}
  For a matrix $\Yb \in \zz_{\geq 0}^{M\times N}$ we say that a sign matrix $\Zb^\sigma \in \{0,+,-\}^{M \times N}$ is \df{\kincomptb with $\Yb$} if $\Yb_{mr}=0$ implies  $\Zb^\sigma_{mr} \in\{
0, +\}$.  In such a case, let 
  \begin{align}
      \mc Z(\Yb, \Zb^\sigma) := \left\{ \Zb\in\mathbb{R}^{M\times N} \colon \mathrm{sign}(\Zb) = \Zb^\sigma \right\},
\label{eq:kinetic_RHS}
  \end{align}
 where $\sign(\cdot)$ is the componentwise sign function. 
\end{definition}

For any $\Yb\in\zz_{\geq 0}^{M\times N}$ and \kincomptb $\Zb^\sigma$, the set $\mathcal{Z}(\Yb, \Zb^\sigma)$ is an unbounded convex cone in $\R^{M\times N}$.  
According to~\cite{harstoth}, the differential equation 
$
\dot{\xb}=\Zb\cdot \xb^\Yb$
 is kinetic if $\Zb \in \mc Z(\Yb, \Zb^\sigma)$ for any \kincomptb  $\Zb^\sigma$.

\begin{example}
The Lotka-Volterra equations 
    \begin{align*}
        \dot{x} &= \alpha x - \beta xy \\
        \dot{y} &= \delta xy - \gamma y
    \end{align*}
(with $\alpha, \beta, \gamma, \delta > 0$) can be written in this framework. Let 
\begin{align*}
    \Yb = \begin{pmatrix}
        1 & 1 & 0 \\
        0 & 1 & 1
    \end{pmatrix}
    \quad \text{and} \quad 
    \Zb^\sigma = \begin{pmatrix}
        + & - & 0 \\
        0 & + & -
    \end{pmatrix},
\end{align*}
where clearly $\Zb^\sigma$ is kinetically compatible with $\Yb$. The matrix corresponding to the Lotka-Volterra equations is
    $
        \Zb = \begin{pmatrix}
            \alpha & -\beta & 0 \\
            0 & \delta & -\gamma
        \end{pmatrix}
    $, 
which belongs to the set $\mc Z(\Yb, \Zb^\sigma)$.
\end{example}


We defined $\mc Z(\Yb, \Zb^\sigma)$ in order to study possible behaviours a kinetic differential equation may exhibit for some choice of rate coefficients.

\begin{definition}
    Let $\Zb^\sigma$ be \kincomptb with the matrix $\Yb$. 
  We say that $\mc Z(\Yb, \Zb^\sigma)$ has the \df{capacity to be complex balanced} (or \df{capacity to be weakly reversible}) if there exists $\Zb \in \mc Z(\Yb, \Zb^\sigma)$ such that $\dot{\xb} = \Zb \cdot \xb^{\Yb}$ has a complex balanced (or weakly reversible) realization. 
\end{definition}

\begin{theorem}
    Let $\Zb^\sigma$ be \kincomptb with the matrix $\Yb$. The set $\mc Z(\Yb, \Zb^\sigma)$ has the capacity to be complex balanced if and only if it has the capacity to be weakly reversible. 
\end{theorem}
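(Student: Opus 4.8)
The plan is to prove the two implications separately, treating the passage from complex balancing to weak reversibility as essentially classical and concentrating the real work on the converse. First I would record the trivial reduction: by definition $\mc Z(\Yb,\Zb^\sigma)$ has the capacity to be complex balanced (respectively weakly reversible) precisely when some $\Zb$ in the cone admits a realization $(\Yb,\Ab_k)$ of the stated type. So it suffices to manufacture, from a realization of one type, a realization of the other whose Kirchhoff matrix $\Ab_k$ still satisfies $\sign(\Yb\Ab_k)=\Zb^\sigma$, i.e.\ whose induced right-hand side remains in the cone (possibly for a different representative $\Zb$).

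For the implication complex balanced $\Rightarrow$ weakly reversible I would simply invoke the classical fact, from the Horn--Jackson circle of ideas, that any mass action system possessing a positive complex balanced steady state is necessarily weakly reversible. Thus the very realization witnessing the capacity to be complex balanced is already a weakly reversible realization of the same $\Zb\in\mc Z(\Yb,\Zb^\sigma)$, and nothing further is needed.

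For the converse I would argue constructively. Suppose $\Zb_0\in\mc Z(\Yb,\Zb^\sigma)$ has a weakly reversible realization $(\Yb,\Ab_k)$ with reaction graph $G$, so $\Ab_k$ is the weighted Laplacian of $G$ and $\Yb\Ab_k=\Zb_0$. Since $G$ is weakly reversible, each linkage class is strongly connected and $\Ab_k$ is block diagonal with respect to the linkage class partition; by the matrix--tree theorem (Perron--Frobenius) each block has a one dimensional kernel spanned by a strictly positive vector, and summing these gives $\vv v\in\rr_{>0}^{N}$ with $\Ab_k\vv v=\vv 0$. The key step is to set $\Ab_k':=\Ab_k\,\diag(\vv v)$, rescaling the $n$-th column of $\Ab_k$ by $v_n>0$. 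I would then verify: $\Ab_k'$ is again Kirchhoff, since its off-diagonal entries stay nonnegative and $\oneb\T\Ab_k'=(\oneb\T\Ab_k)\diag(\vv v)=\vv 0$; it has the same reaction graph $G$; it is complex balanced at $\xb^*=\oneb$, because $\Ab_k'(\oneb)^{\Yb}=\Ab_k'\oneb=\Ab_k\vv v=\vv 0$; and finally $\Yb\Ab_k'=(\Yb\Ab_k)\diag(\vv v)=\Zb_0\diag(\vv v)$ scales each column of $\Zb_0$ by a positive number, so $\sign(\Yb\Ab_k')=\sign(\Zb_0)=\Zb^\sigma$. Hence $\Zb_0\diag(\vv v)\in\mc Z(\Yb,\Zb^\sigma)$ has the complex balanced realization $(\Yb,\Ab_k')$, giving the capacity to be complex balanced.

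The step I expect to be the crux is matching the sign pattern: imposing complex balance is an equality constraint on the rate coefficients that in general alters $\Yb\Ab_k$, so one cannot simply reuse the given rates, nor appeal to the existence of \emph{some} complex balanced choice on $G$ whose $\Yb$-image might fall outside the cone. The resolution I would emphasize is that column rescaling by the positive kernel vector simultaneously achieves complex balance (the kernel vector is exactly the monomial vector making $\oneb$ an equilibrium) and preserves the sign pattern (positive column scalings act trivially on signs). The only facts requiring care are the existence and strict positivity of $\vv v$, which is precisely where weak reversibility is used, and the decoupling across linkage classes that lets the single point $\xb^*=\oneb$ serve for all of them at once.
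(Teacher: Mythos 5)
Your proof is correct and takes essentially the same route as the paper's: the paper rescales the Kirchhoff matrix by $\diag\left(\vv p/(\xb^*)^{\Yb}\right)$ for an arbitrary positive $\xb^*$ (the scaling in Eq.~\eqref{eq:WRCBscale}), and your construction is exactly this specialized to $\xb^*=\oneb$, where the scaling reduces to $\Ab_k\diag(\vv v)$ for a positive kernel vector $\vv v$. Your explicit check that $\Yb\Ab_k'=\Zb_0\diag(\vv v)$ has the same sign pattern and hence stays in $\mc Z(\Yb,\Zb^\sigma)$ is a step the paper leaves implicit, so your write-up is, if anything, slightly more complete.
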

\begin{proof}
    It is known that complex balancing implies weak reversibility. Suppose there is some $\Zb \in \mc Z(\Yb, \Zb^\sigma)$ with a weakly reversible realization $(\Yb, \Ab_k)$. Then there is a positive vector $\vv p$ in the kernel of the Kirchoff matrix $\Ab_k$. Fix an arbitrary positive state $\xb^*$. Consider the following scaled Kirchoff matrix
    \begin{align}\label{eq:WRCBscale}
        \Ab_k' &= \Ab_k \cdot \diag\left( \frac{\vv p}{(\xb^*)^{\Yb}} \right)
    \end{align}
    where the division $\frac{\vv p}{(\xb^*)^{\Yb}}$ is componentwise. Then 
    \begin{align*}
        \Ab_k' (\xb^*)^{\Yb} = \Ab_k \cdot  \diag\left( \frac{\vv p}{(\xb^*)^{\Yb}} \right) \cdot (\xb^*)^{\Yb}
        = \Ab_k\cdot \vv p = \vv 0, 
    \end{align*}
i.e., $\Ab_k'$ defines a complex balanced realization with stationary concentration $\xb^*$.
\end{proof}

\section{On deficiency zero realizations and uniqueness}
\label{sec:uniqueness}

Recall that the deficiency of a reaction network is the nonnegative integer $\delta = N - L - S$, where $N$ is the number of complexes, $L$ is the number of linkage classes, and $S$ is the dimension of the stoichiometric subspace $\SSS$. A network with lower deficiency tends to have nicer dynamical properties; for example, by the Deficiency Zero Theorem or Deficiency One Theorem. Let us formulate a necessary and sufficient condition on network structure for zero deficiency~\cite{feinbergbook}.

We say that the vectors $\yb_0,\yb_1,\dots,\yb_N\in\R^M$ are \df{affinely independent} if $\yb_1-\yb_0,\dots,\yb_N-\yb_0$ are linearly independent. Note that in this case $\yb_0-\yb_n,\dots, \yb_i - \yb_n, \dots, \yb_N-\yb_n$ are linearly independent for any $n=1,2,\dots,N$ and $i \neq n$. Also, linearly independent vectors are affinely independent.

\begin{theorem}\label{thm:zerodef}
A reaction network is deficiency zero if and only if
\begin{enumerate}
\item
the complex vectors within each linkage class are affinely independent, and
\item
the stoichiometric spaces belonging to the linkage class are linearly independent.
\end{enumerate}
\end{theorem}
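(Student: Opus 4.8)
The plan is to decompose the stoichiometric space $\SSS$ according to linkage classes and to exhibit the deficiency as the total slack in two nonnegative inequalities, one measuring the failure of condition (1) and one measuring the failure of condition (2). First I would index the linkage classes by $\ell = 1,\dots,L$, let $N_\ell$ denote the number of complexes in the $\ell$-th class (so $\sum_\ell N_\ell = N$), and define the local stoichiometric space $\SSS_\ell$ to be the span of the reaction vectors belonging to linkage class $\ell$, with $S_\ell := \dim \SSS_\ell$. Since every reaction connects two complexes in the same connected component of the reaction graph, the full set of reaction vectors is partitioned across linkage classes, and hence $\SSS = \SSS_1 + \cdots + \SSS_L$.

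The first key step is the per-class bound $S_\ell \le N_\ell - 1$, with equality exactly when the $N_\ell$ complex vectors of that class are affinely independent. The crucial input is that a linkage class is connected as an undirected graph: connectedness lets me write any difference $\yb_j - \yb_i$ between complexes in the class as a telescoping sum of reaction vectors along a connecting path, so $\SSS_\ell$ coincides with the span of all pairwise differences of the $N_\ell$ complex vectors. That span has dimension $N_\ell - 1$ precisely when the complexes are affinely independent (in the sense of the definition preceding the theorem) and strictly less otherwise, which gives the inequality and its equality condition, namely condition (1).

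The second key step is subadditivity of dimension for a sum of subspaces:
\[
S \;=\; \dim\!\big(\textstyle\sum_\ell \SSS_\ell\big) \;\le\; \sum_\ell \dim \SSS_\ell \;=\; \sum_\ell S_\ell,
\]
with equality if and only if the sum is direct, i.e.\ the subspaces $\SSS_1,\dots,\SSS_L$ are linearly independent, which is exactly condition (2). Combining the two steps yields
\[
S \;\le\; \sum_\ell S_\ell \;\le\; \sum_\ell (N_\ell - 1) \;=\; N - L,
\]
which already reproves $\delta = N - L - S \ge 0$. Since both inequalities point the same way, $\delta = 0$ forces both to be equalities simultaneously: the right-hand equality is condition (1) holding in every linkage class, and the left-hand equality is condition (2). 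Conversely, if (1) and (2) both hold, both inequalities are tight and $S = N - L$, so $\delta = 0$.

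I expect the only genuinely delicate point to be the joint equality analysis: because the two inequalities are chained in the same direction, one must check that $S = N-L$ cannot be achieved by slack in one inequality being compensated by the other. This is immediate once the chain is written out, since each inequality is individually sharp and there is no cross-compensation, but it is worth making explicit rather than leaving implicit. The connectedness argument in the first step (telescoping differences along paths) is routine, yet it is the load-bearing structural fact that makes \emph{affine} independence, rather than linear independence, the correct criterion within each linkage class.
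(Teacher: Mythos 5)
Your proof is correct and takes essentially the same approach as the paper: both decompose the deficiency as $\delta = \sum_{\ell}(N_\ell - 1 - \dim \SSS_\ell) + \bigl(\sum_{\ell}\dim\SSS_\ell - \dim\SSS\bigr)$ and observe that $\delta = 0$ forces both nonnegative terms to vanish, the first giving affine independence within each linkage class and the second giving linear independence of the subspaces $\SSS_\ell$. The only difference is presentational: you make explicit the per-class bound $\dim\SSS_\ell \le N_\ell - 1$ and its equality case via telescoping along paths, a fact the paper's proof uses implicitly.
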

\begin{proof}
Consider the $\ell$-th linkage class. Let $N_\ell$ be the number of complexes in this  linkage class, and let $\SSS_\ell$ be the stoichiometric space belonging to this linkage class. Let $\delta_\ell := N_\ell - 1 - \dim \SSS_\ell$ be the deficiency of the $\ell$-th linkage class. Finally, let $\SSS$ be the full stoichiometric space.

The deficiency of the entire network can be related to the deficiencies of the linkage classes:
\begin{align*}
\delta &= N - L - S
       = \sum_{\ell=1}^L (N_\ell - 1 - \dim \SSS_\ell) + \sum_{\ell=1}^L \SSS_\ell - \dim \SSS\\
       &= \sum_{\ell=1}^L \delta_\ell + \sum_{\ell=1}^L \dim \SSS_\ell - \dim \SSS
       \geq \sum_{\ell=1}^L \delta_\ell.
\end{align*}
Equality holds if and only if the stoichiometric spaces belonging to the linkage classes $\{\SSS_\ell\}_{\ell=1}^L$ are linearly independent. Moreover, $\delta = 0$ if and only if all $\delta_\ell =0$. Finally, $\delta_{\ell}=0$ implies $\dim \SSS_\ell=N_\ell-1$, i.e. the complexes in any given linkage classes are affinely independent.
\end{proof}

If we restrict ourselves to weakly reversible realizations, the deficiency $\delta$ is minimized if the set of complexes are precisely the exponents of monomials~\cite{craciunjinyu}. Each additional complex that does not appear as exponents of monomials increases the deficiency $\delta$. It has been shown that any deficiency zero realization with one strong terminal class is unique if the set of complexes is fixed~\cite{Csercsik2012a}.  Combining these two facts, we have the following Proposition.
\begin{proposition}
\label{corollary:def0_1lc}
Any weakly reversible deficiency zero realization with a single linkage class is unique.
\end{proposition}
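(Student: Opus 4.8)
The plan is to combine the two cited facts directly, since the Proposition is stated as their conjunction. First I would set up the situation: suppose $\dot{\xb} = \Zb \cdot \xb^{\Yb}$ admits a weakly reversible deficiency zero realization with a single linkage class. The claim is that this realization is unique (up to nothing, since the reaction graph is what we are tracking). The key observation is that for weakly reversible realizations, Theorem~\ref{thm:CBnonewcomplex} guarantees that a weakly reversible realization exists using precisely the monomials appearing in $\mathbf{f}(\xb) = \Zb \cdot \xb^{\Yb}$ as complexes, and the subsequent remark (following Theorem~\ref{thm:zerodef}) notes that among weakly reversible realizations the deficiency is minimized exactly when the complex set equals these monomials, with any extra complex strictly increasing $\delta$. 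Hence a \emph{deficiency zero} weakly reversible realization is forced to have its complex set equal to the exponents of the monomials in $\mathbf{f}$; there is no freedom to add or drop complexes.

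With the complex set thereby pinned down, I would invoke the second cited fact~\cite{Csercsik2012a}: any deficiency zero realization with a single strong terminal class is unique once the set of complexes is fixed. Here I must connect ``single linkage class, weakly reversible'' to ``single strong terminal class.'' This is immediate from the definitions in the excerpt: weak reversibility means every strong component is ergodic (i.e. every linkage class is itself strongly connected), so a single linkage class that is weakly reversible is a single strong terminal class ($T = L = 1$). Thus the hypotheses of the uniqueness result from~\cite{Csercsik2012a} are satisfied, and the realization is unique.

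The structure of the argument is therefore: (i) use Theorem~\ref{thm:CBnonewcomplex} together with the deficiency-minimization remark to conclude that the complex set of any weakly reversible deficiency zero realization is forced to be exactly the monomial exponents of $\mathbf{f}$, so all candidate realizations share the same fixed $\Yb$; (ii) observe that weak reversibility plus a single linkage class gives a single strong terminal class; (iii) apply the fixed-complex-set uniqueness result of~\cite{Csercsik2012a} to conclude uniqueness of the reaction graph.

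The main obstacle I anticipate is step (i): one must be careful that the complex set is genuinely forced and not merely minimized, i.e. that a deficiency zero weakly reversible realization \emph{cannot} carry any extraneous complex. The deficiency-minimization statement asserts that each extra complex strictly increases $\delta$ beyond the minimum; so if that minimum is already zero, then zero deficiency is only achievable with no extra complexes, which closes the gap. I would state this explicitly rather than leaving it implicit, since it is the crux that reduces the problem to the fixed-complex-set case handled by~\cite{Csercsik2012a}.
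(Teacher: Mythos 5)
Your proposal is correct and follows essentially the same route as the paper, which derives the Proposition by combining exactly the two cited facts: the deficiency-minimality of the monomial-exponent complex set for weakly reversible realizations~\cite{craciunjinyu}, and the fixed-complex-set uniqueness of deficiency zero realizations with one strong terminal class~\cite{Csercsik2012a}. Your write-up is in fact more careful than the paper's one-line "combining these two facts," since you make explicit both the forcing of the complex set (zero deficiency excludes extraneous complexes because each one strictly increases $\delta$) and the translation of "weakly reversible with a single linkage class" into "single strong terminal class" ($T=L=1$).
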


The hypotheses of weakly reversible and deficiency zero in Proposition~\ref{corollary:def0_1lc} are required as the following examples illustrate.
\begin{example}
    The reaction networks $\cf{Y} \xleftarrow{k} \cf{0} \xrightarrow{k} \cf{X}$ and $\cf{0} \xrightarrow{k} \cf{X+Y}$ are two different deficiency zero realizations with one linkage class, although they are not weakly reversible.
\end{example}

\begin{example}
    The reaction network $\cf{0} \rightleftharpoons \cf{X} \rightleftharpoons \cf{2X}$ (with any rate coefficients) and the reaction network 
    \begin{center}
    \begin{tikzpicture}
        \node (0) at (0,0) {\cf{0}};
        \node (x) at (2,0) {\cf{X}};
        \node (2x) at (4,0) {\cf{2X}};
        \draw [-{>[harpoon]}, transform canvas={yshift=1.25pt}] (0)--(x);
        \draw [-{>[harpoon]}, transform canvas={yshift=-1.25pt}] (x)--(0);
        \draw [-{>[harpoon]}, transform canvas={yshift=1.25pt}] (x)--(2x);
        \draw [-{>[harpoon]}, transform canvas={yshift=-1.25pt}] (2x)--(x);
        \draw [->] (0)  to [bend left] (2x);
        \draw [->] (2x)  to [bend left] (0);
    \end{tikzpicture}   
    \end{center}
(for wisely chosen rate coefficients) are dynamically equivalent. Both are weakly reversible and with one linkage class, but $\delta = 1$. 
\end{example}

\begin{example}
    Two dynamically equivalent weakly reversible realizations where one has $\delta = 1$ are
    \begin{center}
    \begin{tikzpicture}
        \node (0) at (0,0) {\cf{0}};
        \node (x) at (2,0) {\cf{X}};
        \node (y) at (0,2) {\cf{Y}};
        \node (xy) at (2,2) {\cf{X+Y}};
        \draw [-{>[harpoon]}, transform canvas={yshift=1pt, xshift=-1pt}] (0)--(xy);
        \draw [-{>[harpoon]}, transform canvas={yshift=-1pt, xshift=1pt}] (xy)--(0);
        \draw [-{>[harpoon]}, transform canvas={yshift=-1pt, xshift=-1pt}]  (x)--(y);
        \draw [-{>[harpoon]}, transform canvas={yshift=1pt, xshift=1pt}]  (y)--(x);
    \end{tikzpicture}
    \hspace{0.75cm}
     \begin{tikzpicture}
        \node at (0,0) {};
        \node at (0,1) {and};
     \end{tikzpicture}
    \hspace{0.75cm}
    \begin{tikzpicture}
        \node (0) at (0,0) {\cf{0}};
        \node (x) at (2,0) {\cf{X}};
        \node (y) at (0,2) {\cf{Y}};
        \node (xy) at (2,2) {\cf{X+Y}};
        \draw [-{>[harpoon]}, transform canvas={yshift=1.25pt}] (0)--(x);
        \draw [-{>[harpoon]}, transform canvas={yshift=-1.25pt}] (x)--(0);
        \draw [-{>[harpoon]}, transform canvas={xshift=-1.25pt}] (x)--(xy);
        \draw [-{>[harpoon]}, transform canvas={xshift=1.25pt}] (xy)--(x);
        \draw [-{>[harpoon]}, transform canvas={yshift=-1.25pt}] (xy)--(y);
        \draw [-{>[harpoon]}, transform canvas={yshift=1.25pt}] (y)--(xy);
        \draw [-{>[harpoon]}, transform canvas={xshift=-1.25pt}] (0)--(y);
        \draw [-{>[harpoon]}, transform canvas={xshift=1.25pt}] (y)--(0);
    \end{tikzpicture}
    \end{center}
with all rate coefficients are taken to be $1$.
\end{example}

Nonetheless, Proposition~\ref{corollary:def0_1lc} naturally leads us to the following conjecture, for which a proof has been proposed~\cite{CraciunJinYu_uniqueness}:
\begin{conjecture}
    Any weakly reversible deficiency zero reaction network has a unique realization.
\end{conjecture}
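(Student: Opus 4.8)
The plan is to fix the set of complexes, reduce dynamical equivalence to a comparison of weighted Laplacians, and then exploit the defining injectivity property of zero deficiency. First I would record that the complex set is forced: since extra complexes strictly raise the deficiency of a weakly reversible realization~\cite{craciunjinyu} (this is the deficiency-minimization principle used for Proposition~\ref{corollary:def0_1lc}), a weakly reversible deficiency-zero realization must use exactly the monomials of $\Zb\xb^{\Yb}$ as its complexes, and these monomials are an invariant of the equation. Hence any two weakly reversible deficiency-zero realizations share one common complex matrix $\Yb$, and it suffices to show the Kirchhoff matrix is unique. Writing the two candidates as $(\Yb,\Ab_k)$ and $(\Yb,\Ab_k')$ with reaction graphs $G,G'$, dynamical equivalence is $\Yb(\Ab_k-\Ab_k')=\nulb$; since distinct complexes yield linearly independent monomials, every column of $D:=\Ab_k-\Ab_k'$ lies in $\Ker\Yb$.

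Next I would set up the linear-algebra bookkeeping. The $n$-th column of a Laplacian $\Ab_k$ equals $\sum_j[\Ab_k]_{jn}(u_j-u_n)$, where $u_1,\dots,u_N$ is the standard basis of the complex space $\R^N$, so it lies in $\mc L_G:=\Span\{u_j-u_i : i\to j\in G\}$; one has $\dim\mc L_G=N-L(G)$ and $\delta(G)=\dim(\Ker\Yb\cap\mc L_G)$, i.e. deficiency zero says precisely that $\Yb$ is injective on $\mc L_G$. I would also prove that $\SSS$ is an invariant of the equation: restricted to one (strongly connected) linkage class the Laplacian has a one-dimensional kernel spanned by a positive vector, and together with injectivity of $\Yb$ on $\mc L_G$ this gives $\Span\{\zb_n : n\in\ell\}=\SSS_\ell$; summing over $\ell$ yields $\SSS=\Span\{\zb_n\}$, which depends only on the right-hand side, so $\SSS(G)=\SSS(G')=:\SSS$.

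I would then pass to the union graph $H:=G\cup G'$ on the same vertices. Every edge of $G$ (resp. $G'$) lies on a directed cycle of $G$ (resp. $G'$), hence of $H$, so $H$ is weakly reversible; its stoichiometric space is again $\SSS$, and from $\delta(G)=N-L(G)-S=0$ one obtains the identity $\delta(H)=L(G)-L(H)\ge 0$, with equality exactly when the union merges no linkage classes, i.e. when $G$ and $G'$ have identical linkage classes. Every column of $\Ab_k$ lies in $\mc L_G\subseteq\mc L_H$ and every column of $\Ab_k'$ in $\mc L_{G'}\subseteq\mc L_H$, so every column of $D$ lies in $\Ker\Yb\cap\mc L_H$, a space of dimension $\delta(H)$; thus if $\delta(H)=0$ then $D=\nulb$ and $\Ab_k=\Ab_k'$. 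The theorem therefore collapses to the single claim that two weakly reversible deficiency-zero realizations of the same equation induce the same partition of $\Yb$ into linkage classes.

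This partition-coincidence is the crux and I expect it to be the main obstacle. The difficulty is intrinsic: it fails once deficiency is positive (the weakly reversible $\delta=1$ square and the weakly reversible $\delta=0$ realization on the same four complexes are dynamically equivalent), so any argument must use deficiency zero of \emph{both} networks. The approach I would take is to compare the two direct-sum decompositions $\SSS=\bigoplus_a\SSS_a^G=\bigoplus_b\SSS_b^{G'}$ supplied by Theorem~\ref{thm:zerodef}, form the common refinement $W_{ab}:=\SSS_a^G\cap\SSS_b^{G'}$, and bound the block sizes $n_{ab}$ (the number of complexes lying in $G$-class $a$ and $G'$-class $b$) in two ways: affine independence within each class gives $n_{ab}\le\dim W_{ab}+1$, while the spanning identity $\Span\{\zb_i : i\in a\}=\SSS_a^G$ forces $n_{ab}\ge\dim W_{ab}$. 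These pinch each class into exactly one ``$+1$'' block, producing a perfect matching between the $a$'s and the $b$'s; the remaining work—excluding the extra blocks with $n_{ab}=\dim W_{ab}\ge 1$ that could still merge linkage classes—is where the genuine geometry enters, through the fact that weak reversibility makes each net reaction vector $\zb_n$ point from $\yb_n$ into the convex hull of the other complexes of its own class. Once the partitions are shown to coincide, the union-graph step above (or, equivalently, Proposition~\ref{corollary:def0_1lc} applied linkage class by linkage class) closes the argument.
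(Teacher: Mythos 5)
First, a point of comparison you could not have known: the paper does \emph{not} prove this statement. It is posed as a conjecture, with a proof only attributed to the external reference \cite{CraciunJinYu_uniqueness}. So there is no in-paper argument to match; your attempt has to stand on its own, and as written it does not close.

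What you have is a correct and well-organized \emph{reduction}, not a proof. The individual steps are sound: forcing the complex set to be exactly the monomials of the right-hand side via the deficiency-minimization fact from \cite{craciunjinyu}; concluding from linear independence of distinct monomials that every column of $D:=\Ab_k-\Ab_k'$ lies in $\Ker\Yb$; the identity $\delta=\dim\bigl(\Ker\Yb\cap\mc L_G\bigr)$; recovering $\SSS_\ell=\Span\{\zb_n : n\in\ell\}$ from the one-dimensional positive kernel of each irreducible Laplacian block, hence $\SSS$ as an invariant of the equation; and the union-graph computation $\delta(H)=L(G)-L(H)$ together with the observation that $\delta(H)=0$ forces $D=\nulb$. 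But all of this funnels into the single claim that two weakly reversible deficiency-zero realizations of the same equation induce the \emph{same partition} of the common complex set into linkage classes, and that claim is exactly what you do not establish. Your counting scheme is fine as far as it goes: with $W_{ab}:=\SSS_a^G\cap\SSS_b^{G'}$ one indeed gets $\dim W_{ab}\le n_{ab}\le \dim W_{ab}+1$ and exactly one ``$+1$'' block per row and per column. However, this pinching cannot exclude additional nonempty blocks with $n_{ab}=\dim W_{ab}\ge 1$ sitting off the matching, and that is precisely the configuration which merges linkage classes in $H$, makes $\delta(H)>0$, enlarges $\Ker\Yb\cap\mc L_H$, and leaves room for $D\neq\nulb$. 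You concede this yourself (``the remaining work \dots is where the genuine geometry enters''). That missing step is not a routine verification; it is the substantive content of the conjecture, and closing it requires the geometric input about how the vectors $\zb_n$ of a weakly reversible class sit relative to the convex position of the complexes in that class --- the part developed in the proposed proof \cite{CraciunJinYu_uniqueness}, and absent here.
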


\bigskip \bigskip

Throughout this work, we have seen over and over again that realizability (more precisely, identifying the network structure) is an underdetermined problem. Is it possible to use this flexibility in network structure to get more out information about a kinetic differential equation?

An idea comes to mind; we will formulate a conjecture following an example.
\begin{example}\label{ex:conj}
Consider the mass action system
    \begin{center}
    \begin{tikzpicture}
        \node (0) at (0,0) {\cf{0}};
        \node (2x) at (4,0) {\cf{2X}};
        \node (xy) at (2,2) {\cf{X+Y}};
        \node (x) at (5,2) {\cf{X}};
        \draw [->] (0)--(xy) node [midway, above] {$k_2$};
        \draw [->] (2x)--(0) node [midway, below] {$k_4$};
        \draw [->] (2x)--(xy) node [midway, right] {$k_3$};
        \draw [->] (xy)--(x) node [midway, above] {$k_1$};
    \end{tikzpicture}   
    \end{center}
\noindent 
for any rate coefficients $k_1$, $k_2$, $k_3$, $k_4 > 0$. This system has a weakly reversible, deficiency zero realization:
    \begin{center}
    \begin{tikzpicture}
        \node (0) at (0,0) {\cf{0}};
        \node (2x) at (4,0) {\cf{2X}};
        \node (xy) at (2,2) {\cf{X+Y}};
        \draw [-{>[harpoon]}, transform canvas={xshift=-1pt, yshift=1pt}] (0)--(xy) node [midway, above] {$k_2$};
        \draw [-{>[harpoon]}, transform canvas={xshift=1pt, yshift=-1pt}] (xy)--(0) node [midway, right] {$k_1/2$};
        \draw [->] (2x)--(0) node [midway, below] {$k_4$};
        \draw [-{>[harpoon]}, transform canvas={xshift=-1pt, yshift=-1pt}] (2x)--(xy) node [midway, left] {$k_3$};
        \draw [-{>[harpoon]}, transform canvas={xshift=1pt, yshift=1pt}] (xy)--(2x) node [midway, above right] {$k_1/2$};
    \end{tikzpicture}   
    \end{center}

The induced kinetic differential equation of both systems is
$$
\dot{x}=k_2-x^2(k_3+2k_4), \qquad \dot{y}=k_2-k_1xy+k_3x^2.
$$
In particular, the latter mass action system is complex balanced for all choice of rate coefficients. This implies that the kinetic differential equation enjoys all the dynamical properties of complex balanced systems, although we could have been looking at the network that is not weakly reversible.
\end{example}

\begin{conjecture}\label{conj:codimvar}
Consider a reaction network $\NN_1$, and assume that there exists a \WR\ reaction network $\NN_2$ of deficiency $\delta$ such that any \de\ induced by $\NN_1$ and \emph{positive} $\kb$ values can also be induced by $\NN_2$ and positive $\kbh$ values, and viceversa.
Then, the set of rate coefficient values of $\NN_1$ for which $\NN_1$ induces systems that have \CB\ realizations contains an algebraic variety of codimension not larger than $\delta$.
\end{conjecture}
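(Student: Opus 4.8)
The plan is to combine two ingredients: a description of the complex-balanced locus of the weakly reversible network $\NN_2$ as the positive part of an algebraic variety of codimension at most $\delta$, and a transport of this variety to the parameter space of $\NN_1$ through the (linear) dynamical-equivalence correspondence.

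First I would make the complex-balanced locus of $\NN_2$ precise. Write $P_2 \subseteq \R^{R_2}_{>0}$ for its space of positive rate vectors and $\Ab_{\kbh}$ for the associated Kirchhoff matrix. Weak reversibility gives $\dim \ker \Ab_{\kbh} = L_2$, with positive kernel generators whose entries are the tree constants $K_n(\kbh)$ (Matrix--Tree theorem), one per complex. A vector $\kbh$ is complex balanced exactly when $(\xb^*)^{\Yb_2}$ lies in $\ker \Ab_{\kbh}\cap \R^{N_2}_{>0}$ for some $\xb^*>0$; taking logarithms turns this into the requirement that $(\ln K_n(\kbh))_n$ lie in the image of the linear map $(w,v)\mapsto (\yb_n\cdot w - v_{\ell(n)})_n$ on $\R^{M}\times \R^{L_2}$, where $\ell(n)$ is the linkage class of complex $n$. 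A kernel computation (the constraint forces $w\in\SSS^{\perp}$) shows this image has codimension $N_2-L_2-S_2=\delta$ in $\R^{N_2}$; choosing an integer basis $u^{(1)},\dots,u^{(\delta)}$ of its orthogonal complement, complex balancing becomes the $\delta$ binomial conditions $\prod_n K_n(\kbh)^{u^{(j)}_n}=1$. Hence the complex-balanced locus $V_2\subseteq P_2$ is the positive part of an algebraic variety of codimension at most $\delta$; this is the toric description of Craciun--Dickenstein--Shiu--Sturmfels, and collapses to the Deficiency Zero Theorem when $\delta=0$.

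Next I would transport $V_2$ to $\NN_1$. Let $\Psi_i$ send a rate vector to its induced ODE; each $\Psi_i$ is linear in the rate coefficients, and the hypothesis says precisely that the positive images $\Psi_1(P_1)$ and $\Psi_2(P_2)$ coincide in a common set $W$ and, equivalently, that both coordinate projections $\pi_1,\pi_2$ of the linear correspondence $\Gamma:=\{(\kb,\kbh)\colon \Psi_1(\kb)=\Psi_2(\kbh)\}$ are surjective. The set $\pi_2^{-1}(V_2)\subseteq \Gamma$ is cut out by the same $\delta$ binomials in $\kbh$, so it has codimension at most $\delta$ in $\Gamma$; and every $(\kb,\kbh)$ in it has $\kbh\in V_2$, so that $(\Yb_2,\Ab_{\kbh})$ is a complex balanced realization of the ODE $\Psi_1(\kb)$. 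Thus $\pi_1(\pi_2^{-1}(V_2))$ is contained in the set of $\kb$ for which $\NN_1$ induces a system admitting a complex balanced realization. A fiber-dimension count, using that the generic fibers of $\pi_1$ and $\pi_2$ have dimensions $R_2-\dim W$ and $R_1-\dim W$, gives $\dim\pi_1(\pi_2^{-1}(V_2))\ge R_1-\delta$, i.e. codimension at most $\delta$ in $P_1$.

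The main obstacle is this last step: $\pi_1(\pi_2^{-1}(V_2))$ is the image of a variety under a linear projection, hence only constructible (semialgebraic), not Zariski closed, whereas the conjecture demands an honest algebraic variety inside the complex-balanceable set. I would attack this by passing to a top-dimensional irreducible component $Z$ of the Zariski closure of the image and arguing that $Z$, or a relatively open dense subset of it, still lies in the complex-balanceable set, which amounts to showing that complex-balanceability is preserved in the relevant limits. Controlling the boundary is exactly where the argument is delicate: one must rule out that the complex balanced realizations degenerate along $Z$ (rate coefficients leaving the positive orthant, or $\xb^*$ escaping to the boundary of $\R^M_{>0}$). Establishing this closedness, perhaps by exploiting the explicit binomial structure of $V_2$ together with the linearity of $\Gamma$ to realize $Z$ as the image of a map admitting a rational section over a dense open set, is the crux, and the reason the statement is posed as a conjecture rather than a theorem.
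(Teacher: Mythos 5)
First, a point of order: the paper does not prove this statement. It is posed as an open problem (Conjecture~\ref{conj:codimvar}, motivated by Example~\ref{ex:conj}), so there is no proof of record to compare yours against; the only question is whether your proposal settles the conjecture, and by your own admission it does not. That said, your strategy is sensible and its first two ingredients are essentially correct. The description of the complex-balanced locus $V_2$ of $\NN_2$ as the positive part of a variety cut out by $\delta$ binomial relations among the tree constants is exactly the toric picture of Craciun--Dickenstein--Shiu--Sturmfels, and your codimension computation ($\ker$ of the map $(w,v)\mapsto(\yb_n\cdot w-v_{\ell(n)})_n$ forcing $w\in\SSS^{\perp}$, hence image of dimension $L_2+S_2$) is right. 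The transport through $\Gamma=\{(\kb,\kbh):\Psi_1(\kb)=\Psi_2(\kbh)\}$ is also legitimate, and in fact cleaner than you make it: since $\Gamma$ is a linear space and every nonempty fibre of $\pi_1|_{\Gamma}$ is a translate of $\{0\}\times\ker\Psi_2$, the fibre-dimension count needs no genericity hypothesis at all. The two-sided hypothesis of the conjecture is used correctly to ensure that $\pi_2^{-1}(V_2)$ meets the positive orthant and that its $\pi_1$-image lies in the complex-balanceable set of $\NN_1$.

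The genuine gap is the one you flag yourself, and it is not a removable technicality within the argument as written: $\pi_1\bigl(\pi_2^{-1}(V_2)\bigr)$ is only constructible (semialgebraic once positivity is imposed), while the conjecture demands an algebraic variety \emph{contained in} the complex-balanceable set. Passing to the Zariski closure $Z$ of the image requires showing that complex-balanceability of $\Psi_1(\kb)$ survives the limits adjoined at the boundary, i.e.\ ruling out that the witnessing realizations $(\Yb_2,\Ab_{\kbh})$ degenerate (entries of $\kbh$ tending to $0$ or $\infty$, or the balancing point $\xb^*$ leaving $\rr^{M}_{>0}$), and you supply no mechanism for this. A second, smaller omission: your dimension count is carried out for Zariski closures, but every set in play is a positive part; you need that $V_2\cap\rr^{R_2}_{>0}$ has real dimension equal to $\dim V_2$ (this does follow from the toric structure, whose positive points are Zariski dense, but it must be invoked) and that intersecting $\pi_2^{-1}(V_2)$ with the positive orthant does not drop its dimension further. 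So your proposal correctly isolates the obstruction and assembles the right tools, but it leaves the statement exactly where the paper leaves it: conjectural.
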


\section{Discussion}

We have been talking about finding realizations (with certain network properties) of kinetic differential equations. Starting with a system of polynomial differential equations (possibly with symbolic parameters), one could try to find a (Feinberg-Horn-Jackson) reaction graph  that induces the differential equation under mass action kinetics. After fixing a set of complexes, algorithms exist to find realizations (Section~\ref{sec:algorithms}). Furthermore, conditions on the resulting reaction graph (e.g., weakly reversible, complex balanced, omit certain reactions) can be incorporated into the algorithms.\footnote{Codes for solving the problems in the paper can be requested from the authors.}

We also considered graph operations on the reaction network that will preserve the kinetic differential equation. For example, in certain cases reactions can be added, and complexes can be removed. Our discussion in Section~\ref{sec:modnetworkDE} is far from complete; it would be interesting to further explore the allowed operations while preserving dynamical equivalence. 

In Section~\ref{sec:WRandCB}, we explored the relationship between weakly reversible and complex balanced realizations. Hidden inside the proof of the main theorem in that section is the scaling equation \eqref{eq:WRCBscale}. We believe there is information hidden about the manifold in parameter space that decides whether or not a kinetic differential equation can be complex balanced. 

Finally, in our discussion of deficiency zero realizations and uniqueness, we posed two conjectures. The first is that weakly reversible deficiency zero networks have unique realization. The latter is more complicated; it asks for what rate coefficients is the reaction network \emph{dynamically equivalent to complex balanced}. 

As the reader can see, a theoretical understanding of the relationships between network structure, the sign structure of the differential equation, and dynamical property is lacking. We hope to see further studies exploring the realizability problem.

\subsection*{Acknowledgement}
The authors started to work on this paper when enjoying the hospitality of the Erwin Schr\"odinger Institut in Vienna as participants of the meeting Advances in Chemical Reaction Network Theory, 15--19 October, 2018. G\'abor Szederk\'enyi and J\'anos T\'oth thank the support of the National Research, Development, and Innovation Office, Hungary (SNN 125739). Gheorghe Craciun and Polly Y. Yu have been supported by National Science Foundation grant DMS-1816238. Polly Y. Yu also by the Natural Sciences and Engineering Research Council of Canada. Elisa Tonello was supported by the Volkswagen Stiftung (Volkswagen Foundation) under the funding initiative Life? - A fresh scientific approach to the basic principles of life (project ID: 93063). We utilized the detailed comments of the reviewers gratefully.

\bibliography{Inverse}
\bibliographystyle{plain}
\nocite{*}

\section*{Supplementary}

\subsection*{Examples: Realizations with a minimal number of complexes}

Simple examples can be treated directly, without detailed application of Theorem \ref{thm:general}.
\begin{example}
If the \de\ $\dot{x}=kx$ is given (no matter what the sign of $k$ is), then we might try to look for an inducing reaction network consisting of the single complex \ce{X} and find it impossible the minimal reason being that one cannot have a reaction network with less than two complexes. The right hand side contains a single monomial, but the realization should contain more than one complexes.
\end{example}
\begin{example}\label{ex:fourcomplexes}
If the \de\
\begin{equation}\label{eq:fourcomplexes}
\dot{x}=-k_1x+2k_2y,\quad \dot{y}=3k_1x-k_2y
\end{equation}
is given with $k_1,k_2>0$, then at least four complexes are needed to realize the equation. How do we prove this?
\begin{description}
\item[Two complexes are not enough.]
With the notations of Theorem \ref{thm:general} we have
$$
\Zb=\left(\begin{array}{cc}
-k_{1} & 2k_{2} \\
3k_{1} & -k_{2} \\
\end{array}\right),
\Yb=\left(\begin{array}{cc}
1 & 0 \\
0 & 1 \\
\end{array}\right),
\bb=\nulb\in\R^2.
$$
As $\Yb$ is invertible, $\vb=\Yb^{-1}\nulb=\nulb$ should hold. However,
$\Yb\Bb=\Bb=\Zb$ cannot hold, because $\Zb$ is not a compartmental matrix.

\item[Three complexes are not enough either.]
Include $(\begin{array}{cc}\alpha_1 & \alpha_2\end{array})\T$ among the complex vectors with unknown nonnegative integers $\alpha_1$ and $\alpha_2$ so that this vector is different from the other two we have as yet included. Then our task is to find nonnegative rate coefficients $k_{21},k_{31},k_{12},k_{32},k_{13},k_{23}$ (at least two of them positive) so that
{\small
\begin{align*}
&\left(\begin{array}{ccc}
1 & 0 &\alpha_1\\
0 & 1 &\alpha_2\\
  \end{array}\right)
\left(\begin{array}{ccc}
-k_{12}-k_{13} & k_{21}        & k_{31} \\
k_{12}         & -k_{21}-k_{23}& k_{32} \\
k_{13}         & k_{23}        & -k_{31}-k_{32}
\end{array}\right)
\left(\begin{array}{c}
  x \\
  y \\
  x^{\alpha_1}y^{\alpha_2}
\end{array}\right)\\
&=
\left(\begin{array}{c}
 -(k_{12} + k_{13} - k_{13} \alpha_1)x + (k_{21} + k_{23} \alpha_1)y +
   (k_{31} -(k_{31} + k_{32}) \alpha_1)x^{\alpha_1} y^{\alpha_2}  \\
   (k_{12} + k_{13} \alpha_2)x
   -(k_{21} + k_{23} - k_{23} \alpha_2)y +  (k_{32} -(k_{31} + k_{32}) \alpha_2)x^{\alpha_1} y^{\alpha_2}
\end{array}\right).
\end{align*}
}
First of all, one should have zero coefficients before $x^{\alpha_1}y^{\alpha_2}$,
thus
$
  \alpha_1=\frac{k_{31}}{k_{31}+k_{32}}\quad\alpha_2=\frac{k_{32}}{k_{31}+k_{32}},
$
should hold, therefore
$
  1-\alpha_1=\alpha_2\quad 1-\alpha_2=\alpha_1.
$
Equating the coefficients of $x$ and $y$ here with those of the right hand side gives an equation without solution for $k_{12},\ldots,\alpha_2$ so that $\alpha_1$ and $\alpha_2$ are nonnegative integers, and among the nonnegative numbers $k_{nq}$ at least two are positive.
The equations are as follows.
\begin{align}
&k_{12}+k_{13}(1-\alpha_1)&=k_{12}+k_{13}\alpha_2=&k_1, \label{eq:3complex1}\\
&k_{21}+k_{23}\alpha_1    &=                      &2k_2,\label{eq:3complex2}\\
&k_{12}+k_{13}\alpha_2    &=                      &3k_1,\label{eq:3complex3}\\
&k_{21}+k_{23}(1-\alpha_2)&=k_{21}+k_{23}\alpha_1=&k_2. \label{eq:3complex4}
\end{align}
Obviously,
\eqref{eq:3complex1} and \eqref{eq:3complex3} could only hold if $k_1=0$, which cannot happen. Similarly,
\eqref{eq:3complex2} and \eqref{eq:3complex4} could only hold if $k_2=0$, which cannot happen either.
\item[Four complexes are enough.]
Finally, one can easily calculate that the complex vectors in the complex matrix
$$
\left(\begin{array}{cccc}
 1 & 0 & 0 & 2 \\
 0 & 1 & 3 & 0
\end{array}\right)
$$
are enough, Eq. \eqref{eq:fourcomplexes} is the \ikde\ of the reaction network
$$\ce{X ->[$k_1$] 3Y, \quad Y ->[$k_2$] 2X}.$$
Note that this realization has the minimal number of reactions,
(one reaction step is not enough),
and is of deficiency zero, as well.
\end{description}
Summing up: here the right hand side of \eqref{eq:fourcomplexes} contains two monomials, still there exists no realization with two complexes, more are needed.

Furthermore,  three complexes are enough if we allow complexes in $\rr_{\geq 0}^M$ instead of $\zz_{\geq 0}^M$. In particular, $\alpha_1 = \frac 45, \alpha_2 = \frac 35$ is an appropriate choice.  In the pioneering paper~\cite{hornjackson} this generalization to non-negative real coefficients for complexes is allowed, although here we restrict ourselves to non-negative integer coefficients.
\end{example}

\subsection{More examples}

\begin{example}\label{ex:yinverse}
Invertibility of the complex matrix $\Yb$ is not sufficient to have a realization with the exponents of the monomials as complex vectors, as the following example shows.
$$
\dot{x}=x^3-xy^2, \dot{y}=2x^3
$$
implies that
$$
\Yb=\left(\begin{array}{cc}
3 & 1 \\
0 & 2 \\
\end{array}\right),\quad
\Zb=\left(\begin{array}{cc}
1 & -1 \\
2 & 0 \\
\end{array}\right)
$$
and the (unique) solution to
$$
\left(\begin{array}{cc}
3 & 1 \\
0 & 2 \\
\end{array}\right)
\left(\begin{array}{cc}
-b_{21}-u_1 & b_{12} \\
b_{21} & -b_{12}-u_2 \\
\end{array}\right)
=\left(\begin{array}{cc}
1 & -1 \\
2 & 0 \\
\end{array}\right)
$$
is
$$
b_{12}=-1/3, b_{21}=1, u_1=-1, u_2=1/3,
$$
with some negative numbers.
We are forced to formulate a problem: Characterize those matrices with nonnegative (integer) components the inverse of which multiplied with a matrix without negative cross effect gives a compartmental matrix. As this problem again requires finding the feasible solutions of a linear programming (more precisely, MIL) problem, there is not too much hope to find an explicit symbolic solution, but at the same time one can have numerical solution(s) using LP-solvers.
\end{example}
\begin{example}\label{ex:canonical}
Consider the polynomial equation
\begin{equation}\label{eq:poly}
\dot{x}=k_1x^2y^5z-k_{-1}x^3y^4z\quad
\dot{y}=-k_1x^2y^5z+k_{-1}x^3y^4z\quad
\dot{z}=0
\end{equation}
with $k_1,k_{-1}>0$.
The canonical representation of this equation is
\begin{align*}
&\ce{2X + 5Y + Z ->[$k_1$] 3X + 5Y + Z},&\ce{3X + 4Y + Z ->[$k_{-1}$] 2X + 4Y + Z},\\
&\ce{2X + 5Y + Z ->[$k_1$] 2X + 4Y + Z},&\ce{3X + 4Y + Z ->[$k_{-1}$] 3X + 5Y + Z},
\end{align*}
a weakly reversible reaction network with $N=4$ complexes, $L=1$ linkage class and a $S=2$-dimensional stoichiometric space, thus it has a deficiency $1=N-L-S$.

Direct calculation shows that the only positive stationary point is
$$
x_*=\frac{x_0+y_0}{1+K}\quad y_*=\frac{K(x_0+y_0)}{1+K}\quad z_*=z_0,
$$
however, its existence directly follows from weak reversibility~\cite{borosPosSteadyStates}.

\end{example}
\begin{example}\label{ex:opencc}
The canonical realization of the \de\
\begin{equation}\label{eq:opencc}
\dot{x}=-4x+2y+3z\quad\dot{y}=x-5y+z+1\quad\dot{z}=2x+3y-4z
\end{equation}
is the irreversible, deficiency three reaction network consisting of a single linkage class of Figure~\ref{fig:canonicalfirst}.
\begin{figure}[!h]
  \centering
  \begin{tikzcd}[ampersand replacement=\&,row sep=large,column sep=small]
    \& \cf{X+Y} \& \& \cf{Y} \arrow[ll,"2"'] \arrow[dr,"3"] \arrow[ld,rightharpoonup,yshift=-1,"5"] \& \\
    \cf{X} \arrow[ru,"1"] \arrow[rr,"4"] \arrow[dr,"2"'] \& \& \cf{0} \arrow[ru,rightharpoonup,yshift=4pt,"1"] \& \& \cf{Y+Z} \\
    \& \cf{X+Z} \& \& \cf{Z} \arrow[ll,"3"] \arrow[lu,"4"] \arrow[ru,"1"'] \&
  \end{tikzcd}
  \caption{The canonical realization of Eq. \eqref{eq:opencc}}\label{fig:canonicalfirst}
\end{figure}
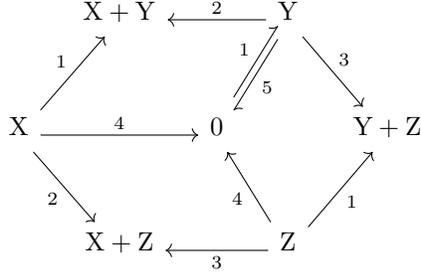
However, it is "obviously"
the \ikde\ of the zero deficiency weakly reversible reaction network in Figure~\ref{fig:opencc1}. (Here again, the unique positive stationary point can explicitely be determined.)
\begin{figure}[!h]
  \centering
  \begin{tikzcd}[ampersand replacement=\&,column sep=large]
    \& \cf{X} \arrow[rd,"1"] \arrow[dd,rightharpoonup,xshift=1.5pt,"1"] \arrow[ld,rightharpoonup,"2"] \& \\
    \cf{Z} \arrow[ru,rightharpoonup,yshift=4pt,"3"] \arrow[rd,rightharpoonup,yshift=4pt,"1"] \& \& \cf{0} \arrow[ld,"1"] \\
    \& \cf{Y} \arrow[uu,rightharpoonup,xshift=-1.5pt,"2"] \arrow[lu,rightharpoonup,"3"] \&
  \end{tikzcd}
  \caption{A weakly reversible compartmental realization of Eq. \eqref{eq:opencc}}\label{fig:opencc1}
\end{figure}
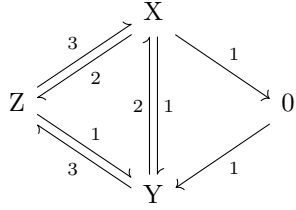
\end{example}

Let us see a nonlinear example.
\begin{example}\label{ex:opengcc}
The canonical realization of the \de\
\begin{equation}\label{eq:opengcc}
\dot{x}=-12x^3+6y^2,\quad\dot{y}=2x^3-4y^2+2
\end{equation}
is the irreversible, deficiency 3 reaction network consisting of two linkage classes
\begin{align*}
&\ce{0 ->[2] Y}\quad\ce{2Y ->[4] Y}\quad\ce{2Y ->[6] X + 2Y}\\
&\ce{3X ->[2] 3X + Y}\quad\ce{3X ->[12] 2X}.
\end{align*}
However, it is (much less) "obviously"
the \ikde\ of the weakly reversible, zero deficiency reaction network in Figure~\ref{fig:opengcc} consisting of a single linkage class.
\begin{figure}[!h]
  \centering
  \begin{tikzcd}[ampersand replacement=\&,column sep=small]
    \cf{3X} \arrow[rr,rightharpoonup,yshift=4pt,"1"] \arrow[rd,"3"'] \& \& \cf{2Y} \arrow[ll,rightharpoonup,yshift=1pt,"2"]\\
    \& \cf{0} \arrow[ur,"1"'] \& 
  \end{tikzcd}
  \caption{A weakly reversible, zero deficiency reaction network inducing Eq. \eqref{eq:opengcc}}\label{fig:opengcc}
\end{figure}
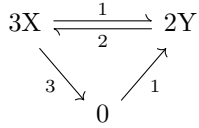
To show this in a formal way, we apply Theorem \ref{thm:general}.

With the notations of Theorem \ref{thm:general} we have
$$
\Zb=\left(\begin{array}{cc}
-{12} & 6 \\
{2} & -4
\end{array}\right),
\Yb=\left(\begin{array}{cc}
3 & 0 \\
0 & 2
\end{array}\right),
\bb=\left(\begin{array}{c}
0 \\
2
\end{array}\right).
$$
As $\Yb$ is invertible, $\vb=\Yb^{-1}\bb=\left(\begin{array}{c}
0 \\
1
\end{array}\right)$ should hold.
The unique solution of
\begin{align*}
\left(\begin{array}{cc}
3 & 0 \\
0 & 2
\end{array}\right)
\left(\begin{array}{cc}
-b_{21}-u_1 & b_{12} \\
b_{21} & -b_{12}-u_2
\end{array}\right)=
\left(\begin{array}{cc}
-{12} & 6 \\
{2} & -4
\end{array}\right)
\end{align*}
under the conditions
$b_{12},b_{21},u_1,u_2\ge0$
is  calculated with or without the direct use of $\Yb^{-1}$.
Thus, the unique reaction network to induce Eq. \eqref{eq:opengcc} (only having the complexes defined by the monomials on the right hand side) is seen in
Figure~\ref{fig:opengcc}.
\emph{How to write the linear equation for the coefficients and why does it tells us that the solution exists and is unique? Here we have four unknowns and four equations, in general we have $N^2=M^2$ equations and the same number of unknowns.}
\end{example}
Contrary to what the title suggests, here we show a network with thee complexes inducing a differential equation having two terms on the right hand side.
\begin{example}\label{ex:binomfour}
The \ikde\ of the network
\begin{equation*}
\ce{\alpha_1X <=>[k_1][k_{-1}] \beta_1X <=>[k_2][k_{-2}] \beta_2X}
\end{equation*}
with $\alpha_1<\beta_1<\beta_2$ and 
\begin{equation}
    k_2(\beta_2-\beta_1)=k_{-1}(\beta_1-\alpha_1)\label{eq:convexity}
\end{equation}
only has two terms on its right hand side:
\begin{equation*}
\dot{x}=k_1x^{\alpha_1}(\beta_1-\alpha_1)+k_{-2}x^{\beta_2}(\beta_1-\beta_2).
\end{equation*}
No wonder, as the condition \eqref{eq:convexity} implies that $\beta_1$ is the convex combination of $\alpha_1$ and $\beta_2.$

\end{example}

\begin{example}
The \ikde\ of the reaction network
\begin{equation}\label{eq:revreac}
\ce{2X + 5Y + Z <=>[$k_1$][$k_{-1}$] 3X + 4Y + Z}
\end{equation}
is \eqref{eq:poly} which we repeat here:
\begin{equation}
\dot{x}=k_1x^2y^5z-k_{-1}x^3y^4z\quad
\dot{y}=-k_1x^2y^5z+k_{-1}x^3y^4z\quad
\dot{z}=0
\end{equation}
This is of the form
\begin{equation}
\dot{\xb}=
k_1x^2y^5z\left(\begin{array}{c} 1 \\-1 \\0\end{array}\right)
+k_{-1}x^3y^4z\left(\begin{array}{c}-1 \\1 \\0\end{array}\right),
\end{equation}
therefore an inducing reaction network is just the one in  \eqref{eq:revreac}.
\end{example}

\subsection*{Mass conserving realizations}
The reaction \eqref{eq:reaction} is \df{mass conserving}, if there exists a vector $\varrhob\in \rr_{>0}^M$ such that for all $r=1,2,\dots,R$
\begin{equation}\label{eq:masscons}
\sum_{m=1}^M\alpha(m,r)\varrho^m=\sum_{m=1}^M\beta(m,r)\varrho^m
\end{equation}
holds.
Note that to ensure mass conservation the existence of a positive vector in the left kernel of the span of the range of the \rhs\ is necessary but not sufficient,~\cite[p. 89]{feinberghornkinetic}.

Consider again the differential equation
\begin{equation}\label{eq:de}
\dot{\xb}=\Zb\xb^\Yb.
\end{equation}
According to the definition \eqref{eq:masscons} the existence of a vector $\varrhob\in \rr_{> 0}^M$ such that
\begin{equation}\label{eq:massconsnecess}
\varrhob\T\Zb=\nulb\in\R^R
\end{equation}
is a necessary condition of mass conservation.
However, it is only sufficient with some restrictions.
\begin{theorem}\label{thm:masscons}
If \eqref{eq:de} has a realization such that the number of linkage classes is equal to the number of ergodic components (terminal strong linkage classes) and there exists a vector $\varrhob\in \rr_{> 0}^M $ such that \eqref{eq:massconsnecess} holds, then this realization is mass conserving.
\end{theorem}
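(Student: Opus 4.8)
The plan is to reduce mass conservation of the realization $(\Yb,\Ab_k)$, where $\Zb=\Yb\Ab_k$, to a statement about the left kernel of its Kirchhoff matrix. First I would set $\wb\T:=\varrhob\T\Yb$, so that the necessary condition \eqref{eq:massconsnecess} becomes
\[
\wb\T\Ab_k=\varrhob\T\Yb\Ab_k=\varrhob\T\Zb=\nulb,
\]
that is, $\wb$ lies in the left kernel of $\Ab_k$. The $n$-th entry $w_n=\varrhob\T\yb_n$ is exactly the mass of the $n$-th complex, and the mass-conservation identity \eqref{eq:masscons} for a reaction $i\to j$ is precisely $w_i=w_j$. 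Since every reaction joins two complexes of the same linkage class, it therefore suffices to prove that $\wb$ is constant on each linkage class; the same positive vector $\varrhob$ then witnesses mass conservation, completing the proof.

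The heart of the argument is the claim that, when $L=T$, every left null vector of $\Ab_k$ is constant on each linkage class. One inclusion is immediate: for each linkage class $\ell$ the indicator vector $\mathbf{1}_\ell\in\{0,1\}^N$ satisfies $\mathbf{1}_\ell\T\Ab_k=\nulb$, because every reaction out of a complex stays inside its own linkage class, so the sum of the entries of $\Ab_k$ over the rows indexed by $\ell$ vanishes in each column; these $L$ vectors are linearly independent, so the left kernel has dimension at least $L$. On the other hand, the kernel of a Kirchhoff matrix has dimension equal to the number $T$ of terminal strong linkage classes, and by rank-nullity the left kernel has the same dimension. Under the hypothesis $L=T$ the two bounds coincide, forcing the left kernel to equal $\Span\{\mathbf{1}_1,\dots,\mathbf{1}_L\}$; in particular $\wb$ is constant on each linkage class, as required.

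The step I expect to be the main obstacle is justifying $\dim\ker\Ab_k=T$ cleanly, so I would also supply a self-contained maximum-principle proof of the claim that avoids it. Reading $\wb\T\Ab_k=\nulb$ column by column gives, for every complex $j$,
\[
\sum_{i\,:\,j\to i} k_{ij}\,(w_i-w_j)=0,
\]
so whenever $j$ has outgoing reactions, $w_j$ is a convex combination of the values $w_i$ over its out-neighbours. On the unique terminal strong linkage class $\Lambda_\ell$ of a given linkage class (unique precisely because $L=T$), strong connectivity together with this averaging property forces $\wb$ to be constant, say equal to $c_\ell$. For a complex $j$ lying in the class but outside $\Lambda_\ell$, the equality $L=T$ guarantees that $j$ has at least one outgoing reaction (otherwise $\{j\}$ would be a second terminal strong component), and every directed path from $j$ eventually reaches $\Lambda_\ell$. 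If $\wb$ attained an extreme value different from $c_\ell$ somewhere in the class, then the set on which that extreme value is attained would be closed under following outgoing reactions and would therefore meet $\Lambda_\ell$, contradicting $w\equiv c_\ell$ there; applying this to both the maximum and the minimum of $\wb$ over the class shows $\wb\equiv c_\ell$ throughout. Combined with the reduction of the first paragraph, this establishes that the realization is mass conserving with the given $\varrhob$.
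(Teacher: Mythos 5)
The paper states Theorem~\ref{thm:masscons} without proof (it appears in the Supplementary followed only by a citation to related work), so there is no in-paper argument to compare against; judged on its own merits, your proof is correct and essentially complete. Your reduction is the right one: with $\wb\T=\varrhob\T\Yb$, the hypothesis $\varrhob\T\Zb=\nulb$ together with $\Zb=\Yb\Ab_k$ gives $\wb\T\Ab_k=\nulb$, and mass conservation of the realization is exactly the statement that $w_i=w_j$ for every reaction $i\to j$, i.e., that $\wb$ is constant on each linkage class. Both of your arguments for the key claim are sound: the dimension count (the left kernel of $\Ab_k$ contains the $L$ linearly independent linkage-class indicator vectors, has dimension equal to $\dim\ker\Ab_k=T$ by rank--nullity plus the standard structure theorem for kernels of Kirchhoff matrices, and $L=T$ forces it to coincide with the span of the indicators), and the self-contained maximum-principle argument. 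One wording error in the latter: it is \emph{not} true that every directed path from a complex $j$ eventually reaches $\Lambda_\ell$ --- a path may cycle forever inside a non-terminal strong component. What is true, and is all your argument needs, is that \emph{some} directed path from $j$ reaches $\Lambda_\ell$: follow the condensation DAG of the linkage class to its unique sink. Since the set where $\wb$ attains its maximum (respectively minimum) over the linkage class is closed under following out-edges, that single path places a maximizer (respectively minimizer) inside $\Lambda_\ell$, where $\wb\equiv c_\ell$, and the conclusion follows. With that phrase corrected, your argument is a valid, self-contained justification of a result the paper leaves unproved.
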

It is shown in~\cite{Rudan2013b} that the same kinetic model may have both mass conserving and non-mass conserving realizations.

\end{document}